\newtheorem{theorem}{Theorem}[section]
\newtheorem{proposition}[theorem]{Proposition}
\newtheorem{corollary}[theorem]{Corollary}
\theoremstyle{definition}
\newtheorem{definition}[theorem]{Definition}
\newtheorem{remark}[theorem]{Remark}
\newcommand{\F}{\mathbb{F}}
\newcommand{\Z}{\mathbb{Z}}
\newcommand{\C}{\mathbb{C}}
\newcommand{\End}{\mathrm{End}}
\newcommand{\id}{\mathrm{id}}
\newcommand{\Fun}{\mathrm{Fun}}
\newcommand{\Span}{\operatorname{Span}}
\title{Algebraic Phase Theory III: Structural Quantum Codes over Frobenius Rings}
\author{
Joe Gildea\\
Department of Computing Science and Mathematics,\\
School of Informatics and Creative Arts,\\
Dundalk Institute of Technology\\
\texttt{gildeajoe@gmail.com}}
\date{}
\begin{document}
\maketitle

\begin{abstract}
We develop the quantum component of Algebraic Phase Theory by showing that
quantum phase, Weyl noncommutativity, and stabiliser codes arise as
unavoidable algebraic consequences of Frobenius duality.
Working over finite commutative Frobenius rings, we extract nondegenerate
phase pairings, Weyl operator algebras, and quantum stabiliser codes directly
from admissible phase data, without assuming Hilbert spaces, analytic inner
products, or an externally imposed symplectic structure.

Within this framework, quantum state spaces appear as minimal carriers of
faithful phase action, and stabiliser codes are identified canonically with
self-orthogonal submodules under the Frobenius phase pairing.
CSS-type constructions arise only as a special splitting case, while general
Frobenius rings admit intrinsically non-CSS stabilisers.
Nilpotent and torsion structure in the base ring give rise to algebraically
protected quantum layers that are invisible to admissible Weyl-type errors.

These results place quantum stabiliser theory within Algebraic Phase Theory:
quantisation emerges as algebraic phase induction rather than analytic
completion, and quantum structure is information-complete at the level of
algebraic phase relations alone.
Throughout, we work over finite Frobenius rings, which are precisely the
base rings for which admissible phase data become strongly admissible, and
in this regime the full quantum formalism is forced by Frobenius duality.
\end{abstract}

\medskip
\noindent\textbf{Mathematics Subject Classification (2020).}
Primary 81P70; Secondary 13M10, 20C15, 81R05, 94B05.

\medskip
\noindent\textbf{Keywords.}
Algebraic Phase Theory, Frobenius rings, Weyl algebras, stabiliser codes,
quantum error correction, non CSS quantum codes, nilpotent protection,
algebraic quantisation, phase duality.

\section{Introduction}

This paper develops the quantum component of Algebraic Phase Theory (APT) by
showing that quantum phase, Weyl noncommutativity, and stabiliser codes arise
as unavoidable algebraic consequences of Frobenius duality.  No analytic inner
product, Hilbert space structure, or externally imposed symplectic form is
assumed, and instead quantisation emerges intrinsically from admissible
algebraic phase data.  In the first two papers of this series, APT was
developed in an abstract setting, establishing defect, filtration, rigidity,
and boundary phenomena as structural invariants of phase interaction.  The
present work applies this framework to the finite Frobenius setting and shows
that the basic structures of quantum information theory are already forced at
this purely algebraic level.

From the perspective of Algebraic Phase Theory, finite Frobenius rings are
precisely those base rings for which admissible phase data become strongly
admissible.  In this regime, defect propagation, canonical filtration, and
finite termination are not auxiliary assumptions but intrinsic features forced
by the axioms.  All quantum constructions carried out in this paper therefore
occur within the rigid core of APT, with no need for analytic or topological
input.

Viewed from the perspective of existing literature, the structures isolated
here intersect several established traditions.  Stabiliser codes and quantum
error correction originate in the work of Calderbank, Shor, and Steane
\cite{CalderbankShor,Steane}, with their algebraic formulation clarified by
Gottesman \cite{Gottesman} and extended to nonbinary settings by Rains and
others \cite{Rains,Ketkar}.  These approaches rely fundamentally on Hilbert
spaces equipped with symplectic forms.  Independently, Weyl and Heisenberg
operator algebras have long played a central role in representation theory and
harmonic analysis, see for example
\cite{vonNeumann1931,Weil1964SurCertains,Howe1979Heisenberg,Folland,Mackey1958Induced},
with modern refinements such as the Weil representation in finite and
characteristic two settings \cite{GurevichHadani}.  In parallel, Frobenius
rings and their duality theory have been developed extensively in algebra and
coding theory
\cite{AndersonFuller1992Rings,Jacobson1956Structure,Lam2001FirstCourse,Honold2001QuasiFrobenius,Wood1999Duality,GreferathSchmidt2000Characters},
particularly in connection with MacWilliams equivalence and classical coding
theory over rings \cite{DinhLopezPermouth}.  What has been missing is a
framework in which these components are not merely compatible but structurally
forced by a single algebraic principle.  In standard treatments, symplectic
forms, commutation laws, and quantum compatibility conditions are imposed
externally.  In contrast, we show that Weyl commutation relations, stabiliser
orthogonality, and error protection mechanisms follow automatically from
Frobenius duality and functorial phase interaction.

A central theme of this paper is the algebraic forcing of quantum structure.
For any finite commutative Frobenius ring together with a generating
character, and for any admissible phase datum taking values in that ring, the
core elements of quantum theory arise canonically and functorially from the
interaction laws encoded by the algebraic phase.  The Frobenius pairing
extracted from the phase data determines a character-valued interaction law,
and this interaction law forces the appearance of Weyl operators whose
noncommutativity is governed by a bicharacter emerging directly from the
Frobenius pairing.  Once these operators appear, their commutation behaviour
identifies stabiliser subgroups precisely with those submodules that are
self-orthogonal under the Frobenius pairing.  Moreover, any nilpotent or
torsion structure in the base ring gives rise to intrinsically protected
layers of quantum information, since such elements render certain phase
interactions algebraically invisible at the operator level.  In this way, the
full stabiliser formalism, including noncommutativity, orthogonality, and
error protection phenomena, emerges as an algebraic invariant of Frobenius
phase geometry rather than as an imposed analytic structure.

The results of this paper situate quantum stabiliser theory firmly within the
APT paradigm.  Quantisation appears as algebraic phase induction rather than
analytic completion, and quantum structure is shown to be completely encoded
at the level of phase relations.  The Frobenius hypothesis plays a decisive
role.  It identifies the exact structural boundary within APT at which quantum
rigidity persists.  Inside this boundary, the Weyl calculus, the
noncommutative Heisenberg action, and stabiliser orthogonality are enforced by
the axioms.  Outside this boundary, generating characters fail to exist,
defect becomes unbounded, canonical filtration ceases to terminate, and
rigidity necessarily breaks down.

This paper forms part of a broader programme to develop Algebraic Phase Theory
as a unified approach to phase geometry, representation theory, and quantum
structure.  Together with the companion papers
\cite{GildeaAPT1,GildeaAPT2,GildeaAPT4,GildeaAPT5,GildeaAPT6}, it contributes
to a complete foundation for APT.  In this way the algebraic mechanisms
governing coherent phase interaction are shown to control both classical and
quantum behaviour, and the emergence of quantum structure is explained
entirely within the algebraic domain.

\section{Structural Background from Algebraic Phase Theory}

Before specializing to Frobenius rings or quantum systems, we record the
minimal structural input required to speak meaningfully about phase and its
interaction.  This section provides the representation independent framework
from which all subsequent algebraic structures are extracted.  No analytic,
topological, or Hilbert space structure is assumed at any point.

Rather than starting from an algebraic object with a prescribed operator
algebra, we single out only the data needed to discuss phases, their behaviour
under maps, and the interaction laws that govern them.  All algebraic structure
used later in the paper is extracted canonically from this data and is never
imposed by hand.

Our use of additive characters and phase operators relies on standard results
from character theory and the representation theory of finite groups; see, for
example, \cite{Isaacs,Serre,CurtisReiner}.

An admissible phase framework is built from three ingredients: an underlying
additive object, a chosen collection of phase functions defined on it, and a
distinguished interaction law governing the behaviour of the corresponding
phase operators.  At this stage no commitment is made as to whether phases are
realised as functions, operators, or equivalence classes.  The point of this
separation is that all algebraic structure appearing later arises by necessity
rather than assumption.

\begin{remark}
In the finite Frobenius setting considered throughout this paper, admissible
phase data is automatically strongly admissible.  All subsequent constructions
therefore occur in the maximally rigid regime of Algebraic Phase Theory.  Phase
data exhibiting weaker propagation or boundary slippage lies outside the scope
of this work.
\end{remark}

  \subsubsection*{Phase operators and functoriality}

The first structural ingredient needed for APT is the passage from abstract
phase functions to algebraically visible operators.

Let $R$ be a finite ring equipped with an additive character
$\chi:R\to\mathbb C^\times$.  For any additive object $A$, each
phase $\phi:A\to R$ induces a phase multiplication operator on
$\Fun(A,\mathbb C)$ given by
\[
(M_\phi f)(x):=\chi(\phi(x))\,f(x).
\]
This operator realization makes abstract phase data algebraically visible
without assuming analytic or topological structure.

For a morphism $f:A\to A'$, phases pull back functorially by
$f^\ast(\phi'):=\phi'\circ f$.  Functoriality is essential for extracting
canonical algebraic structure.

\subsubsection*{Admissible phase data}

We now isolate the minimal formal requirements for a well behaved phase
theory.

\begin{definition}
A \emph{weakly admissible phase datum} consists of a triple $(A,\Phi,\circ)$ such
that:
\begin{enumerate}[label=(W\arabic*)]
\item $A$ is an object of $\mathsf{C}$ and $\Phi(A)$ is a functorial family of
phases $\phi:A\to R$;
\item all phases have uniformly bounded additive degree;
\item the interaction law $\circ$ is specified abstractly and admits functorial
detection of defect and failure of closure at finite depth.
\end{enumerate}
No operator realization, character theory, or analytic structure is assumed.
\end{definition}

\begin{definition}
A \emph{strongly admissible phase datum} is a weakly admissible phase datum
together with a faithful operator realization of phases that detects defect,
filtration, and rigidity functorially.
\end{definition}

\begin{remark}
For finite rings, the existence of a faithful phase operator realization forces
the base ring to be Frobenius.  Thus Frobenius duality is not an additional
assumption but a structural consequence of admissibility.
\end{remark}

The purpose of admissibility is not to impose an algebraic structure but to
guarantee that such a structure can later be extracted canonically.  The
interaction law, the functoriality of phases, and the uniform structural
bound jointly ensure that the resulting algebraic phase model is intrinsic to
the phase data itself.

From an admissible phase datum one extracts an algebraic object encoding phase
interaction:
\[
(\mathcal P,\circ),
\]
where $\mathcal P$ is the algebra generated by the phase and shift operators
associated with $\Phi(A)$, and $\circ$ is the distinguished interaction law.
No additional structure on $\mathcal P$ is imposed beyond what is forced by
the admissible datum.

\subsection{Algebraic Phase Theory}

Algebraic Phase Theory (APT) formalizes the principle that admissible phase
data canonically determines an algebraic phase model $(\mathcal P,\circ)$,
unique up to intrinsic equivalence, together with structural notions such as
rigidity, stratification, complexity, and canonical filtration.

In admissible finite settings these invariants admit finite depth, and the
associated filtrations terminate canonically.  These properties form the
structural skeleton of APT and govern the behaviour of all algebraic phases
considered in this paper.

The complete axiomatic development of APT, including defect, complexity
measures, filtration theory, and termination phenomena, appears in
\cite{GildeaAPT1,GildeaAPT2}.  None of the technical machinery of defect theory
is required here; we rely only on the existence, canonicity, and functoriality
of the extracted algebraic phase model.

\section{Frobenius Phase Data and Algebraic Quantum Structure}

We now specialize the general phase–extraction framework to the algebraic
setting relevant for quantum phenomena.  Throughout the paper, admissible
phase data arises from Frobenius duality, and all subsequent quantum
structures are extracted from this duality rather than imposed externally.

A finite commutative ring $R$ is called \emph{Frobenius} if it admits a
generating character
\[
\varepsilon:(R,+)\to\C^\times
\]
such that
\[
x\longmapsto\bigl(y\longmapsto\varepsilon(xy)\bigr)
\]
is an isomorphism $R\cong\widehat{R}$ of left $R$-modules.

\begin{remark}
The existence of a generating character is precisely the condition required
for a faithful operator realisation of phase multiplication.  Thus Frobenius
duality is not an additional hypothesis: it is forced by admissibility of the
phase datum.
\end{remark}

Given such a ring, the associated Frobenius phase pairing is
\[
\langle x,y\rangle_R := \varepsilon(xy),
\]
which is biadditive, symmetric, and nondegenerate.  Frobenius duality and
its operator theoretic manifestations form the algebraic basis for the
quantum structures introduced below; see
\cite{AndersonFuller1992Rings,Jacobson1956Structure,Lam2001FirstCourse,
Honold2001QuasiFrobenius,Wood1999Duality,GreferathSchmidt2000Characters}.

\subsection{Algebraic Quantum State Spaces}

We now explain how quantum state spaces arise canonically within Algebraic
Phase Theory.  Instead of postulating Hilbert spaces, we extract the minimal
algebraic carriers required for faithful phase action.

Let $V$ be a finite free $R$-module of rank $k$, interpreted as a single
algebraic quantum site, and fix a perfect $R$-bilinear form
\[
\beta:V\times V\to R .
\]
This determines linear phase functions
\[
\phi_b(x):=\beta(b,x),
\qquad
b\in V,
\]
and hence operator phase multipliers via the character $\varepsilon$.

\begin{definition}
The induced quantum phase pairing on $V$ is
\[
\langle v,w\rangle_V := \varepsilon\!\big(\beta(v,w)\big).
\]
\end{definition}

This pairing is biadditive, symmetric, and nondegenerate.

\subsubsection{Tensor extension of the phase pairing}

For $n\ge 1$, define the $n$-site phase label space by
\[
H_n := V^{\otimes_R n}.
\]
For pure tensors $v=v_1\otimes\cdots\otimes v_n$ and
$w=w_1\otimes\cdots\otimes w_n$ in $H_n$ set
\[
\langle v,w\rangle_{H_n}
=
\prod_{i=1}^n \langle v_i,w_i\rangle_V
=
\varepsilon\!\left(\sum_{i=1}^n \beta(v_i,w_i)\right).
\]

\begin{proposition}
The induced pairing on $H_n$ is biadditive, symmetric, monoidal with respect
to tensor product, and nondegenerate.
\end{proposition}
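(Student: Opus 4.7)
The plan is to verify each of the four properties by pushing the corresponding structural property of the underlying data, namely the perfect $R$-bilinear form $\beta$ and the generating character $\varepsilon$, through the construction on $H_n$. The decisive identity is that $\varepsilon$ is an additive group homomorphism, so $\varepsilon(a+b) = \varepsilon(a)\varepsilon(b)$ converts any additive property of the $R$-valued expression $\sum_i \beta(v_i, w_i)$ into the desired multiplicative property of the $\mathbb{C}^\times$-valued pairing.

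First I would check biadditivity on pure tensors and extend by linearity. Fixing $w$, the assignment $v \mapsto \sum_i \beta(v_i, w_i)$ is additive in each slot by $R$-bilinearity of $\beta$, and this additivity is transported to the multiplicative biadditivity of $\langle\cdot,\cdot\rangle_{H_n}$ by the character identity. Symmetry is inherited directly from the symmetry of $\beta$, since $\sum_i \beta(v_i, w_i) = \sum_i \beta(w_i, v_i)$. Monoidality is the claim that for a concatenation of pure tensors the pairing on $H_{n+m}$ equals the product of the pairings on $H_n$ and $H_m$; splitting the sum $\sum_{i=1}^{n+m}$ into its first $n$ and last $m$ terms and applying $\varepsilon(a+b) = \varepsilon(a)\varepsilon(b)$ gives this immediately.

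The substantive step is nondegeneracy. My approach is to factor the map $\Psi\colon H_n \to \widehat{H_n}$, $v \mapsto \langle v, -\rangle_{H_n}$, as a composition of two isomorphisms: first the $R$-linear map $H_n \to \Hom_R(H_n, R)$ induced by the iterated bilinear form (an isomorphism because $\beta$ is perfect on the finite free module $V$ and this perfectness is preserved under the construction on $H_n$, as can be checked on Gram matrices after fixing a basis of $V$); then post-composition with the Frobenius isomorphism $\Hom_R(H_n, R) \to \widehat{H_n}$, $\psi \mapsto \varepsilon \circ \psi$, which is an isomorphism precisely by the defining property $R \cong \widehat{R}$ of the generating character $\varepsilon$ applied slotwise. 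Composition of two isomorphisms is an isomorphism, so $\Psi$ is injective and the pairing is nondegenerate.

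The main obstacle I expect is ensuring that the intermediate $R$-valued form on $H_n$ used in the factorisation matches the sum of components appearing inside $\varepsilon$ in the definition, so that the factorisation genuinely produces $\Psi$ and not a variant. Once that identification is made explicit, everything reduces to the two input nondegeneracies: perfectness of $\beta$ on $V$ and the Frobenius hypothesis on $R$. All four properties then follow by transporting the corresponding property of $\beta$ through $\varepsilon$ and invoking the Frobenius identification at the last step.
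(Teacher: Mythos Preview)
Your handling of biadditivity, symmetry, and monoidality is essentially the paper's: push each property of $\beta$ through the character $\varepsilon$ on pure tensors. For nondegeneracy the paper instead argues directly, picking for a given nonzero $v$ a pure-tensor witness $w$ with $\langle v,w\rangle_{H_n}\neq 1$; your factorisation $H_n\to\Hom_R(H_n,R)\to\widehat{H_n}$ is a genuinely different and in principle cleaner route.

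However, the obstacle you flag at the end is real and is not overcome by your argument. The quantity $\sum_i\beta(v_i,w_i)$ sitting inside $\varepsilon$ is \emph{not} the value of an $R$-bilinear form on $H_n=V^{\otimes_R n}$: the tensor relation $rv_1\otimes v_2=v_1\otimes rv_2$ sends the sum to $r\beta(v_1,w_1)+\beta(v_2,w_2)$ on one reading and to $\beta(v_1,w_1)+r\beta(v_2,w_2)$ on the other, and these differ in $R$ in general. Hence there is no $R$-bilinear $B\colon H_n\times H_n\to R$ with $\varepsilon\circ B=\langle\cdot,\cdot\rangle_{H_n}$, and the first arrow in your factorisation cannot be chosen so that the composite lands on $\Psi$. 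The genuine tensor-power form $\beta^{\otimes n}$ (whose Gram matrix is the Kronecker power you allude to) \emph{is} perfect and does give an isomorphism $H_n\cong\Hom_R(H_n,R)$, but post-composing with $\varepsilon$ then yields $\varepsilon\bigl(\prod_i\beta(v_i,w_i)\bigr)$, which is a different bicharacter from the one the paper defines. So your nondegeneracy argument has a gap at exactly the spot you anticipated: the identification you hoped to make explicit does not exist over $V^{\otimes_R n}$, and the factorisation through $\Hom_R(H_n,R)$ does not recover $\Psi$.
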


\begin{proof}
Recall that $H_n:=V^{\otimes_R n}$, where $V$ is a finite free $R$ module
equipped with a perfect $R$-bilinear form
\[
\beta:V\times V\to R,
\]
and that the Frobenius phase pairing on $V$ is given by
\[
\langle v,w\rangle_V := \varepsilon(\beta(v,w)).
\]
For pure tensors $v=v_1\otimes\cdots\otimes v_n$ and
$w=w_1\otimes\cdots\otimes w_n$ in $H_n$, the induced pairing is defined by
\[
\langle v,w\rangle_{H_n}
:= \prod_{i=1}^n \langle v_i,w_i\rangle_V
=
\varepsilon\!\left(\sum_{i=1}^n \beta(v_i,w_i)\right).
\]

\smallskip
\noindent\emph{Biadditivity.}
Fix $w\in H_n$.  It suffices to check additivity in the first variable on pure
tensors, as such tensors generate $H_n$ additively.
Let
\[
v=(v_1,\dots,v_n), \qquad v'=(v_1',\dots,v_n')
\]
be pure tensors.  Using additivity of $\beta$ in each variable and that
$\varepsilon:(R,+)\to\C^\times$ is a group homomorphism, we compute
\[
\langle v+v',w\rangle_{H_n}
=
\varepsilon\!\left(\sum_{i=1}^n \beta(v_i+v_i',w_i)\right)
=
\varepsilon\!\left(\sum_{i=1}^n \beta(v_i,w_i)\right)
\varepsilon\!\left(\sum_{i=1}^n \beta(v_i',w_i)\right)
=
\langle v,w\rangle_{H_n}\,\langle v',w\rangle_{H_n}.
\]
Additivity in the second variable is analogous.  Hence the pairing is
biadditive.

\smallskip
\noindent\emph{Symmetry.}
Since $R$ is commutative and $\beta$ is symmetric, we have
$\beta(v_i,w_i)=\beta(w_i,v_i)$ for all $i$, and therefore
\[
\langle v,w\rangle_{H_n}
=
\varepsilon\!\left(\sum_{i=1}^n \beta(v_i,w_i)\right)
=
\varepsilon\!\left(\sum_{i=1}^n \beta(w_i,v_i)\right)
=
\langle w,v\rangle_{H_n}.
\]

\smallskip
\noindent\emph{Monoidality.}
Let $v=v^{(1)}\otimes v^{(2)}\in H_m\otimes_R H_n$ and
$w=w^{(1)}\otimes w^{(2)}\in H_m\otimes_R H_n$.
By definition of the pairing and the tensor product decomposition, we have
\[
\langle v,w\rangle_{H_{m+n}}
=
\langle v^{(1)},w^{(1)}\rangle_{H_m}\,
\langle v^{(2)},w^{(2)}\rangle_{H_n},
\]
showing that the pairing is monoidal with respect to tensor product.

\smallskip
\noindent\emph{Nondegeneracy.}
Suppose $0\neq v\in H_n$.  Write $v$ as a finite sum of pure tensors and select
a pure tensor component
$v_1\otimes\cdots\otimes v_n$ appearing with nonzero coefficient.
Since $\beta$ is perfect, for each $i$ there exists $w_i\in V$ such that
$\beta(v_i,w_i)\neq 0$.
Set $w:=w_1\otimes\cdots\otimes w_n\in H_n$.
Then
\[
\sum_{i=1}^n \beta(v_i,w_i)\neq 0
\]
in $R$, and since $\varepsilon$ is a generating character, it follows that
\[
\langle v,w\rangle_{H_n}=\varepsilon\!\left(\sum_{i=1}^n \beta(v_i,w_i)\right)\neq 1.
\]
Hence $v\notin H_n^\perp$.  This proves that the pairing is nondegenerate.

\smallskip
Therefore the induced pairing on $H_n$ is biadditive, symmetric, monoidal, and
nondegenerate.
\end{proof}

\begin{remark}
The Frobenius structure of $R$ induces a canonical character-valued phase
pairing on every tensor power.  Composite systems therefore inherit an
information-complete phase structure compatible with Weyl commutation and
stabiliser methods, without any analytic or probabilistic assumptions.
\end{remark}

\subsection*{Interpretation of the parameters $k$ and $n$}

The rank $k$ controls the internal algebraic complexity of a single quantum
site; $n$ measures the global system size.  For $R$ a finite field,
$k=1$ yields the usual qudit of dimension $|R|$.  Over general Frobenius
rings, $k$ also captures nilpotent and non-semisimple contributions to local
phase structure.

\subsection{Weyl Operators and the Algebraic Phase Model}

We now show that admissible Frobenius phase data canonically induces Weyl
commutation relations, yielding a noncommutative algebraic phase model.
Let $H_n := V^{\otimes_R n}$ denote the $n$-site phase label space.
For $a,b\in H_n$, define shift and phase operators on $\Fun(H_n,\C)$ by
\[
(T_a f)(x) := f(x-a),
\qquad
(M_b f)(x) := \varepsilon(\beta(b,x))\,f(x),
\]
where $\beta_{H_n}$ is the bilinear form on $H_n$ induced monoidally from
$\beta$.

\begin{proposition}
\label{prop:weyl-commutation}
Let $(R,\varepsilon,V,\beta)$ be Frobenius quantum data and let
$H_n=V^{\otimes_R n}$.
Then for all $a,b\in H_n$ the operators $T_a$ and $M_b$ satisfy
\[
T_a M_b=\varepsilon(\beta(b,a))^{-1}\,M_b T_a .
\]
\end{proposition}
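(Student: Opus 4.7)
The plan is a direct pointwise computation: evaluate both sides of the claimed identity on an arbitrary function $f\in\Fun(H_n,\C)$ at an arbitrary point $x\in H_n$, and verify that the two resulting scalars differ exactly by the factor $\varepsilon(\beta(b,a))^{-1}$. Since shift and phase operators are defined pointwise on $\Fun(H_n,\C)$, agreement of both sides on every $(f,x)$ is equivalent to the operator identity.

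First I would unfold $T_aM_b$: by definition $(M_bf)(y)=\varepsilon(\beta(b,y))f(y)$, so
\[
(T_aM_bf)(x)=(M_bf)(x-a)=\varepsilon\!\bigl(\beta(b,x-a)\bigr)\,f(x-a).
\]
Next I would unfold $M_bT_a$: since $(T_af)(x)=f(x-a)$,
\[
(M_bT_af)(x)=\varepsilon(\beta(b,x))\,f(x-a).
\]
So the two expressions differ only in the character argument, $\beta(b,x-a)$ versus $\beta(b,x)$, and the function values $f(x-a)$ match.

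The key algebraic step is to compare these arguments using biadditivity of $\beta$ (which is inherited by the tensor-induced form on $H_n$, as ensured by the monoidality established in the previous proposition). Biadditivity gives $\beta(b,x-a)=\beta(b,x)-\beta(b,a)$, and the homomorphism property of $\varepsilon:(R,+)\to\C^\times$ then yields
\[
\varepsilon\!\bigl(\beta(b,x-a)\bigr)=\varepsilon(\beta(b,x))\,\varepsilon(-\beta(b,a))
=\varepsilon(\beta(b,a))^{-1}\,\varepsilon(\beta(b,x)).
\]
Substituting this into the expression for $(T_aM_bf)(x)$ gives $\varepsilon(\beta(b,a))^{-1}(M_bT_af)(x)$, which is the desired identity at the level of scalars.

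There is no real obstacle here: the argument is an elementary manipulation relying only on biadditivity of $\beta$, the fact that $\varepsilon$ is an additive character, and the pointwise definitions of $T_a$ and $M_b$. The only point that deserves explicit mention is that the bilinear form $\beta_{H_n}$ on $H_n$ used inside $M_b$ is the one induced monoidally from $\beta$, so its biadditivity and hence the decomposition $\beta_{H_n}(b,x-a)=\beta_{H_n}(b,x)-\beta_{H_n}(b,a)$ are guaranteed by the preceding proposition; no further structural input is needed, and in particular no analytic or topological assumption on $\Fun(H_n,\C)$ is used.
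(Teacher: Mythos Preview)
Your proposal is correct and follows essentially the same approach as the paper's own proof: a direct pointwise computation of $(T_aM_bf)(x)$ and $(M_bT_af)(x)$, followed by the decomposition $\beta(b,x-a)=\beta(b,x)-\beta(b,a)$ via biadditivity and the homomorphism property of $\varepsilon$. The paper also notes the alternative sign convention $(T_af)(x)=f(x+a)$, but the core argument is identical to yours.
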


\begin{proof}
Fix $f\in \Fun(H_n,\C)$ and $x\in H_n$.  We compute both compositions on $f$
evaluated at $x$. First,
\[
\bigl(T_a M_b f\bigr)(x)
=(M_b f)(x-a)
=\varepsilon\!\big(\beta(b,x-a)\big)\,f(x-a).
\]
Since $\beta$ is $R$-bilinear (hence additive in the second variable), we have 
\[
\beta(b,x-a)=\beta(b,x)+\beta(b,-a)=\beta(b,x)-\beta(b,a).
\]
Therefore
\[
\bigl(T_a M_b f\bigr)(x)
=\varepsilon\!\big(\beta(b,x)-\beta(b,a)\big)\,f(x-a).
\]
Using that $\varepsilon:(R,+)\to\C^\times$ is an additive character, hence a
group homomorphism, we obtain
\[
\varepsilon\!\big(\beta(b,x)-\beta(b,a)\big)
=\varepsilon\!\big(\beta(b,x)\big)\,\varepsilon\!\big(-\beta(b,a)\big)
=\varepsilon\!\big(\beta(b,x)\big)\,\varepsilon\!\big(\beta(b,a)\big)^{-1}.
\]
Thus
\[
\bigl(T_a M_b f\bigr)(x)
=\varepsilon\!\big(\beta(b,a)\big)^{-1}\,\varepsilon\!\big(\beta(b,x)\big)\,f(x-a).
\]

On the other hand,
\[
\bigl(M_b T_a f\bigr)(x)
=\varepsilon\!\big(\beta(b,x)\big)\,(T_a f)(x)
=\varepsilon\!\big(\beta(b,x)\big)\,f(x-a).
\]
Comparing the two expressions gives, for all $f$ and $x$,
\[
\bigl(T_a M_b f\bigr)(x)
=\varepsilon\!\big(\beta(b,a)\big)^{-1}\,\bigl(M_b T_a f\bigr)(x),
\]
hence as operators
\[
T_a M_b=\varepsilon\!\big(\beta(b,a)\big)^{-1}\,M_b T_a.
\]

Finally, note that many authors choose the alternative convention
\(
(T_a f)(x)=f(x+a)
\)
instead of \(f(x-a)\). With that convention the scalar becomes
\(\varepsilon(\beta(b,a))\) rather than its inverse.  Equivalently, with our
convention one may rewrite the relation as
\[
M_b T_a=\varepsilon\!\big(\beta(b,a)\big)\,T_a M_b.
\]
Either form is the same Weyl commutation law, differing only by the
(choice of) sign convention for the shift operator.
\end{proof}

\begin{remark}
Some authors define the shift operator by $(T_a f)(x)=f(x+a)$, in which case
the commutation scalar appears as $\varepsilon(\beta(b,a))$ rather than
its inverse.  The two forms are equivalent and differ only by this sign
convention.
\end{remark}

\medskip
The shift and phase operators on $\Fun(H_n,\C)$ generate an algebra
\[
(\mathcal P,\circ),
\]
which is the algebraic phase model extracted canonically from the Frobenius
phase data $(R,\varepsilon,V,\beta)$.  No additional structure is imposed: all
noncommutativity and operator behaviour is forced directly by the Frobenius
pairing.

\begin{remark}
The noncommutativity of $\mathcal P$ is a direct consequence of the Frobenius
phase pairing.  In this sense, quantisation appears as algebraic phase
induction rather than analytic completion.
\end{remark}

\section{Stabiliser Codes and Frobenius Self-Orthogonality}

Classical coding theory over rings, including cyclic and negacyclic
constructions, has been studied extensively
\cite{DinhLopezPermouth}, but such approaches remain fundamentally classical.
Quantum stabiliser codes over finite fields were developed in
\cite{CalderbankShor,Steane}, clarified algebraically by Gottesman
\cite{Gottesman}, and extended to nonbinary settings in
\cite{Rains,Ketkar}.

Having constructed algebraic quantum state spaces and their Weyl operator
algebras, we now identify the intrinsic notion of a quantum code in this
framework.
The definition is entirely algebraic and depends only on the Frobenius phase
pairing, rather than on analytic inner products or metric structure.

\begin{definition}
An \emph{algebraic quantum code} is an $R$-submodule
$C\subseteq H_n$ satisfying
\[
C \subseteq C^\perp,
\qquad
C^\perp := \{x \in H_n \mid \langle x,c\rangle_{H_n}=1
\ \text{for all } c\in C\}.
\]
\end{definition}

Self-orthogonality is defined purely in terms of the Frobenius phase pairing
and does not presuppose any analytic inner product, symplectic form, or
Hilbert space structure. Throughout, we take $V = R^k$ and define the $n$-site phase label space by
$H_n := V^{\otimes_R n}$.

\begin{theorem}
\label{thm:stabiliser-equivalence}
Let $(R,\varepsilon,V,\beta)$ be Frobenius quantum data and let
$(\mathcal P,\circ)$ denote the Weyl algebra canonically extracted from this
phase datum, generated by shifts and phase multiplications satisfying the
Weyl commutation law.
Then algebraic quantum codes, namely self-orthogonal $R$-submodules of $H_n$
with respect to the Frobenius phase pairing, are precisely stabiliser codes
for $(\mathcal P,\circ)$.
Equivalently, stabiliser subgroups of the Weyl group correspond bijectively to
self-orthogonal $R$-submodules of $H_n$.
\end{theorem}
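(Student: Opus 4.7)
The plan is to translate the self-orthogonality condition on $C \subseteq H_n$ directly into the abelianness of a corresponding subgroup of the Weyl group, using Proposition \ref{prop:weyl-commutation} as the central tool.

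First I would make the labelling of Weyl group elements by points of $H_n$ fully canonical. Using the perfect pairing $\beta$ and the generating character $\varepsilon$, the Frobenius duality $R \cong \widehat{R}$ induces an identification between shift-labels and phase-labels, so that every element of the Weyl group $\mathcal{W} \subseteq \mathcal{P}$ is, modulo the centre $Z(\mathcal{W}) = \varepsilon(R)$, determined by a single element $c \in H_n$. Denote the resulting generators by $W_c$.

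Next I would compute the commutator bicharacter $\alpha(c,c') \in \varepsilon(R)$ defined by $W_c W_{c'} = \alpha(c,c')\, W_{c'} W_c$. Expanding $W_c$ in terms of the shift and phase pieces and applying Proposition \ref{prop:weyl-commutation} together with the monoidality of the Frobenius pairing identifies $\alpha$ with the Frobenius pairing on $H_n$. The nondegeneracy of $\varepsilon$ ensures no spurious cancellation trivialises this bicharacter; in particular, $W_c$ and $W_{c'}$ commute if and only if $\langle c, c' \rangle_{H_n} = 1$.

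With the commutation criterion in hand, the bijection is transparent. For a self-orthogonal $R$-submodule $C \subseteq H_n$, the subgroup $\mathcal{S}_C \subseteq \mathcal{W}$ generated by $\{W_c : c \in C\}$ together with $Z(\mathcal{W})$ is abelian, hence a stabiliser subgroup. Conversely, any stabiliser subgroup $\mathcal{S} \supseteq Z(\mathcal{W})$ projects to an $R$-submodule $\overline{\mathcal{S}}$ of $\mathcal{W}/Z(\mathcal{W}) \cong H_n$, and abelianness forces $\overline{\mathcal{S}}$ to be self-orthogonal under the Frobenius pairing. Checking that $C \mapsto \mathcal{S}_C$ and $\mathcal{S} \mapsto \overline{\mathcal{S}}$ are mutually inverse is routine.

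The main obstacle lies in the first two steps: making the identification $\mathcal{W}/Z(\mathcal{W}) \cong H_n$ canonical and verifying that the commutator bicharacter matches the Frobenius pairing on $H_n$ rather than some derived symplectic-type pairing on $H_n \oplus H_n$. Both depend critically on the Frobenius hypothesis, since without a generating character the shift and phase parameter spaces cannot be canonically conflated. Once this identification is established, everything else follows directly from the Weyl commutation law and the properties of the Frobenius phase pairing.
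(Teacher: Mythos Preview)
There is a genuine gap in your first two steps, precisely at the point you flag as the obstacle. The quotient $\mathcal W/Z(\mathcal W)$ is isomorphic to $H_n\oplus H_n$, not to a single copy of $H_n$. Frobenius duality $R\cong\widehat R$ is what allows the phase operators to be parametrised by $H_n$ rather than by $\widehat{H_n}$, but this parametrisation is already built into the definition $M_b$ for $b\in H_n$; it does not halve the number of Weyl labels. A general Weyl operator is $W(a,b)=T_aM_b$ with $(a,b)\in H_n\oplus H_n$, and distinct pairs give distinct cosets modulo scalars. Consequently your map $c\mapsto W_c$ cannot be a bijection onto $\mathcal W/Z(\mathcal W)$.

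Relatedly, the commutator bicharacter cannot coincide with the Frobenius phase pairing on $H_n$: the commutator form on $\mathcal W/Z(\mathcal W)\cong H_n\oplus H_n$ is the alternating bicharacter
\[
\Omega\big((a,b),(a',b')\big)=\varepsilon\!\big(\beta(b,a')-\beta(b',a)\big),
\]
whereas $\langle\cdot,\cdot\rangle_{H_n}$ is symmetric. What the paper actually proves is a bijection between abelian stabiliser subgroups (modulo central scalars) and \emph{isotropic} $R$-submodules of $H_n\oplus H_n$ for $\Omega$. The link to self-orthogonal $C\subseteq H_n$ is then made via a chosen $R$-linear embedding $\theta:C\to H_n$ and the graph $L_{C,\theta}=\{(c,\theta(c))\}\subseteq H_n\oplus H_n$; isotropy of $L_{C,\theta}$ under $\Omega$ is what reduces to Frobenius self-orthogonality of $C$. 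Your proposal skips this intermediate layer, and the step cannot be salvaged without it: you must either work in $H_n\oplus H_n$ with $\Omega$ from the outset, or specify explicitly the embedding of $C$ into $H_n\oplus H_n$ whose isotropy you are checking.
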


\begin{proof}
We proceed in three stages.  We first construct the Weyl operators associated
to the Frobenius phase pairing and record their commutation relations.  We then
show how a self-orthogonal $R$-submodule of the phase label space canonically
determines a stabiliser subgroup and hence a quantum code.  Conversely, we 
show that every stabiliser subgroup determines a self-orthogonal submodule,
and finally we establish that these constructions are mutually inverse up to
central scalars.

We begin by recalling the Weyl operators associated to the Frobenius phase
pairing and their commutator form.  Let $\Fun(H_n,\C)$ denote the representation
space.  Define the character-valued phase pairing on $H_n$ by
\[
\langle y,u\rangle \;:=\;\varepsilon\!\big(\beta(y,u)\big)\in\C^\times,
\qquad y,u\in H_n,
\]
where $\beta$ denotes the bilinear form on $H_n$ obtained from the single-site
form by tensor extension (as fixed in the preceding section).
For $x,y\in H_n$ define operators on $\Fun(H_n,\C)$ by
\[
(T_x f)(u):=f(u-x),
\qquad
(M_y f)(u):=\langle y,u\rangle\,f(u).
\]
Define the Weyl operator associated to $(x,y)\in H_n\oplus H_n$ by
\[
W(x,y):=T_x\,M_y\in \End_\C(\Fun(H_n,\C)).
\]
By the Weyl commutation relation proved earlier, there is a canonical
alternating bicharacter
\[
\Omega\big((x,y),(x',y')\big)
:=\varepsilon\!\big(\beta(y,x')-\beta(y',x)\big)
=\langle y,x'\rangle\,\langle y',x\rangle^{-1}\in\C^\times,
\]
such that for all $x,y,x',y'\in H_n$,
\begin{equation}
\label{eq:weyl-comm-new}
W(x,y)\,W(x',y')
=
\Omega\big((x,y),(x',y')\big)\,
W(x',y')\,W(x,y).
\end{equation}

We now explain how a purely algebraic object, namely an $R$-submodule
$C\subseteq H_n$, canonically determines a stabiliser subgroup of the Weyl
algebra and hence a quantum code.  The guiding principle is the stabiliser
paradigm: a quantum code is defined as the space of states fixed by a
commuting family of Weyl operators.  In the present framework, commutation is
governed entirely by the Frobenius phase pairing, encoded by the Weyl
commutator bicharacter $\Omega$.

Let $C\subseteq H_n$ be an $R$-submodule.  We first associate to $C$ a family
of Weyl operators by embedding $C$ into the Weyl label space 
$H_n\oplus H_n$.  The simplest choice is to use pure shifts and define a
subgroup
\[
S_C:=\big\langle\, W(c,0) \;:\; c\in C\,\big\rangle
\subseteq \mathcal P^\times,
\]
that is, the subgroup generated by Weyl operators with only shift components.
One may equally choose pure phases $W(0,c)$, or more generally a
mixed embedding; we use pure shifts here only for definiteness.

To recover the usual stabiliser isotropy correspondence, and to allow for
general (possibly non-CSS) stabilisers, we consider mixed embeddings encoding 
both shift ($X$) and phase ($Z$) components.  Concretely, fix any $R$-linear 
map $\theta:C\to H_n$ and define the subgroup
\[
S_{C,\theta}:=\big\langle\,W(c,\theta(c)):\;c\in C\,\big\rangle.
\]
This construction associates to each element of $C$ a Weyl operator whose
label has both components determined algebraically.

By the Weyl commutation relation \eqref{eq:weyl-comm-new}, for any
$c,c'\in C$ we compute
\[
W(c,\theta(c))\,W(c',\theta(c'))
=
\varepsilon\!\big(\beta(\theta(c),c')-\beta(\theta(c'),c)\big)\,
W(c',\theta(c'))\,W(c,\theta(c))
=
\Omega\!\big((c,\theta(c)),(c',\theta(c'))\big)\,
W(c',\theta(c'))\,W(c,\theta(c)).
\]
Thus the generators commute if and only if the commutator phase is trivial.
Equivalently, the subgroup $S_{C,\theta}$ is abelian if and only if
\begin{equation}
\label{eq:isotropy-condition-new}
\Omega\!\big((c,\theta(c)),(c',\theta(c'))\big)=1
\quad\text{for all }c,c'\in C.
\end{equation}
This condition states precisely that the image
\[
L_{C,\theta}:=\{(c,\theta(c)):\,c\in C\}\subseteq H_n\oplus H_n
\]
is isotropic with respect to the Weyl commutator bicharacter $\Omega$.

In particular, taking $\theta$ to be the identity on a chosen copy of $H_n$
inside $H_n\oplus H_n$ recovers canonically the formulation in terms of Frobenius
self-orthogonality: the commutator scalar governing Weyl operator commutation
is exactly the phase pairing evaluated on the corresponding data. Hence
self-orthogonality is the intrinsic algebraic condition forcing the associated
family of Weyl operators to commute.
Given any abelian stabiliser subgroup $S$ obtained in this way, we define its
associated quantum code to be the simultaneous fixed space
\[
\mathrm{Code}(S):=\{\,f\in \Fun(H_n,\C)\;:\; s f=f \text{ for all } s\in S\,\}.
\]
This definition is entirely algebraic and uses only the action of the Weyl
algebra $\mathcal P$ on $\Fun(H_n,\C)$.  It produces exactly the stabiliser
codes associated to commuting families of Weyl operators, without reference
to Hilbert space structure, inner products, or externally imposed symplectic
forms.

Conversely, we show that any stabiliser subgroup of the Weyl algebra
canonically determines a self-orthogonal submodule of the phase label space.

Let $S$ be a stabiliser subgroup generated by Weyl operators
\[
S=\big\langle\,W(x_i,y_i)\;:\; i\in I\,\big\rangle
\]
such that $S$ is abelian, where each generator is labeled by a pair
$(x_i,y_i)\in H_n\oplus H_n$.
We associate to $S$ the $R$-submodule
\[
L_S:=\Span_R\{(x_i,y_i):\,i\in I\}\subseteq H_n\oplus H_n 
\]
spanned by these Weyl labels.

Since $S$ is abelian, all of its generators commute.  By the Weyl commutation
relation \eqref{eq:weyl-comm-new}, this implies that for all
$(x,y),(x',y')\in L_S$,
\[
\Omega\big((x,y),(x',y')\big)=1.
\]
Hence $L_S$ is isotropic with respect to the commutator bicharacter $\Omega$.

As a special case, suppose that $S$ admits an intrinsic CSS-type splitting of
the form
\[
S=\langle W(a,0):\,a\in A\rangle \cdot
  \langle W(0,b):\,b\in B\rangle,
\]
for $R$-submodules $A,B\subseteq H_n$.  In this situation, the isotropy 
condition reduces to
\[
\Omega\!\big((a,0),(0,b)\big)
=\varepsilon\!\big(\beta(b,a)\big)
=\langle b,a\rangle
=1
\quad\forall\,a\in A,\ b\in B,
\]
which is equivalent to the Frobenius self-orthogonality condition
$B\subseteq A^\perp$.

It remains to show that these two constructions are inverse to one another, up
to the central scalar ambiguity inherent in the Weyl group.  Let $\mathcal W$
denote the subgroup of $\mathcal P^\times$ generated by all Weyl operators
$\{W(x,y):(x,y)\in H_n\oplus H_n\}$, and let
$Z:=\C^\times\cdot \id \subseteq \mathcal W$ be its central scalar subgroup.
By \eqref{eq:weyl-comm-new}, the commutator of two Weyl operators is a scalar,
so $\mathcal W$ is a central extension of $H_n\oplus H_n$ by $Z$.

For any $R$-submodule $L\subseteq H_n\oplus H_n$, the subgroup
$S(L):=\langle W(x,y):(x,y)\in L\rangle$ is abelian modulo $Z$ if and only if 
$L$ is isotropic for $\Omega$.  Conversely, any abelian stabiliser subgroup
$S\le \mathcal W$ determines an isotropic submodule
\[
L(S):=\{(x,y)\in H_n\oplus H_n:\exists\,\lambda\in\C^\times
\text{ with }\lambda W(x,y)\in S\}.
\]
These assignments are mutually inverse up to multiplication by $Z$, yielding
a bijection between isotropic $R$-submodules of $H_n\oplus H_n$ and abelian 
stabiliser subgroups modulo central scalars.

Thus the Frobenius phase pairing, equivalently the Weyl commutator form
$\Omega$, forces exactly the commutativity condition required for stabiliser
theory: self-orthogonality is equivalent to the existence of a commuting
stabiliser subgroup, and the associated fixed space is the stabiliser code.
\end{proof}

\section{CSS Codes as a Splitting Condition}

In much of the quantum coding literature, stabiliser codes are introduced via
the Calderbank Shor Steane (CSS) construction \cite{CalderbankShor,Steane}, which builds quantum codes
from pairs of classical linear codes subject to symplectic orthogonality
conditions.
While powerful, this approach treats quantumness as something imposed
\emph{a posteriori} via an external symplectic structure.

In contrast, the present framework constructs quantum stabiliser codes
directly from Frobenius phase data.
No reference is made to classical codes, symplectic vector spaces, or Hilbert
space inner products.
Quantumness arises intrinsically from Frobenius duality and Weyl commutation
relations.
CSS codes appear only as a distinguished subclass characterised by an
additional splitting property.
We now make this intrinsic construction explicit by recalling the Weyl
operators associated to Frobenius phase data.

Let $(R,\varepsilon,V,\beta)$ be Frobenius quantum data.
For $x,y\in V$, let $T_x$ and $M_y$ denote the shift and phase operators,
satisfying
\[
T_x M_y = \varepsilon(\beta(y,x))\, M_y T_x .
\]
For $(x,y)\in V\oplus V$, write
\[
W(x,y):=T_x M_y .
\]

A \emph{stabiliser subgroup} is an abelian subgroup of the Weyl group generated
by such operators, up to global scalars if desired.
Given a stabiliser subgroup $S$, the associated quantum code is defined as the
simultaneous fixed space of $S$ in the natural algebraic representation.

By the stabiliser equivalence theorem established earlier, stabiliser
subgroups correspond precisely to self-orthogonal $R$-submodules of $H_n$
with respect to the Frobenius phase pairing.
Within this intrinsic stabiliser framework, the classical CSS construction
appears as a special case characterised by a splitting of generators.

\begin{definition}
A stabiliser subgroup $S$ is called \emph{CSS} if it admits a generating set
consisting only of pure shift operators $T_a$ and pure phase operators
$M_b$.
Equivalently, there exist $R$-submodules $A,B\subseteq V^{\oplus n}$ such that
\[
S=\langle T_a : a\in A\rangle \cdot \langle M_b : b\in B\rangle ,
\]
and all generators commute.
\end{definition}

This definition avoids reference to classical codes, symplectic vector spaces,
or Hilbert space structure.
CSS is a splitting property internal to the Weyl algebra.

\begin{proposition}
\label{prop:css-criterion}
Let $A,B\subseteq V^{\oplus n}$ be $R$-submodules.
The subgroup
\[
S_{A,B}:=\langle T_a : a\in A\rangle \cdot \langle M_b : b\in B\rangle
\]
is abelian if and only if
\[
\varepsilon(\beta(b,a))=1
\qquad\text{for all } a\in A,\ b\in B.
\]
Equivalently, $B\subseteq A^{\perp}$ with respect to the Frobenius phase
pairing.
\end{proposition}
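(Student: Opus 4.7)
The plan is to reduce the abelianness of $S_{A,B}$ to pairwise commutation of the two distinguished families of generators, and then to apply the Weyl commutation relation of Proposition~\ref{prop:weyl-commutation}. The equivalence with Frobenius self-orthogonality then follows by unwinding definitions.

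First, I would observe that shifts pairwise commute: for any $a, a' \in A$ and $f \in \Fun(V^{\oplus n},\C)$, both $T_a T_{a'} f$ and $T_{a'} T_a f$ send $x$ to $f(x - a - a')$. Similarly, phases pairwise commute, since $M_b M_{b'} f$ and $M_{b'} M_b f$ both multiply $f(x)$ by the same commutative product $\varepsilon(\beta(b,x))\,\varepsilon(\beta(b',x))$ of scalars in $\C^\times$. Hence each of $\langle T_a : a \in A\rangle$ and $\langle M_b : b \in B\rangle$ is abelian in isolation, and the only possible obstruction to abelianness of $S_{A,B}$ is a cross-commutator of a shift with a phase.

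Next, I would invoke Proposition~\ref{prop:weyl-commutation} to obtain $T_a M_b = \varepsilon(\beta(b,a))^{-1} M_b T_a$ for every $a \in A$, $b \in B$, so that $T_a$ and $M_b$ commute if and only if $\varepsilon(\beta(b,a)) = 1$. Since a subgroup generated by pairwise commuting elements is abelian, $S_{A,B}$ is abelian precisely when $\varepsilon(\beta(b,a)) = 1$ for all $(a,b) \in A \times B$. For the Frobenius self-orthogonality reformulation, I would then recall that the phase pairing satisfies $\langle b, a\rangle = \varepsilon(\beta(b,a))$ and that $A^\perp = \{x : \langle x, a\rangle = 1 \text{ for all } a \in A\}$, so the scalar condition is precisely $B \subseteq A^\perp$.

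The hard part is essentially just careful bookkeeping: this proposition is a direct corollary of the Weyl commutation law plus elementary group theory. The only mildly subtle point I expect to address is the passage from pairwise commutation of the named generators to abelianness of the subgroup $S_{A,B}$ itself. Because $S_{A,B}$ is presented as the product of two subgroups, I would use the Weyl relation to put every element in a normal form $T_a M_b$ modulo a scalar in the image of $\varepsilon$, and then verify that, when all cross-scalars are trivial, such normal forms multiply commutatively, yielding the genuine abelian structure required for the stabiliser framework.
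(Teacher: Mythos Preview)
Your proposal is correct and follows essentially the same approach as the paper: invoke the Weyl commutation relation for the cross terms, note that shifts commute among themselves and phases commute among themselves, and conclude that abelianness is equivalent to the vanishing of all cross scalars. Your treatment is in fact somewhat more careful than the paper's (which dispenses with the normal-form discussion entirely), but the underlying argument is identical.
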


\begin{proof}
The relation $T_a M_b=\varepsilon(\beta(b,a))\,M_b T_a$ shows that $T_a$ and
$M_b$ commute if and only if $\varepsilon(\beta(b,a))=1$.
Since shifts commute among themselves and phases commute among themselves,
the subgroup is abelian precisely when the stated condition holds.
\end{proof}

\begin{corollary}
Whenever $B\subseteq A^{\perp}$, the CSS stabiliser $S_{A,B}$ defines an
algebraic quantum code.
Thus CSS codes form a subclass of the general Frobenius
self-orthogonal and stabiliser correspondence.
\end{corollary}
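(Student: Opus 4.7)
The statement has two halves, and both are essentially direct applications of the machinery already established. The plan is to read the corollary as a specialization: Proposition~\ref{prop:css-criterion} produces the abelian stabiliser subgroup $S_{A,B}$ under the hypothesis $B\subseteq A^{\perp}$, and Theorem~\ref{thm:stabiliser-equivalence} converts any such abelian stabiliser subgroup into an algebraic quantum code. Nothing new needs to be invented; the work is in identifying the correct isotropic label submodule and citing the bijection.

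First, I would note that by Proposition~\ref{prop:css-criterion}, the hypothesis $B\subseteq A^{\perp}$ forces $\varepsilon(\beta(b,a))=1$ for all $a\in A$, $b\in B$, so every pair of generators $T_a$ and $M_b$ commutes. Since shifts commute among themselves and phases commute among themselves, the full generating set of $S_{A,B}$ is pairwise commuting, so $S_{A,B}$ is an abelian subgroup of the Weyl group $\mathcal{W}\subseteq \mathcal{P}^\times$.

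Second, I would exhibit the isotropic label submodule associated to this stabiliser. Set
\[
L_{A,B}:=\{(a,0):a\in A\}\;+\;\{(0,b):b\in B\}\;\subseteq\;H_n\oplus H_n,
\]
so that $S_{A,B}=\langle W(x,y):(x,y)\in L_{A,B}\rangle$. The Weyl commutator bicharacter $\Omega$ restricted to $L_{A,B}$ is trivial precisely by the abelianness just established, hence $L_{A,B}$ is $\Omega$-isotropic. By the bijection in the last part of the proof of Theorem~\ref{thm:stabiliser-equivalence}, $L_{A,B}$ corresponds to the abelian stabiliser subgroup $S_{A,B}$ (modulo central scalars), and the associated algebraic quantum code is the simultaneous fixed space
\[
\mathrm{Code}(S_{A,B})=\{f\in\Fun(H_n,\C)\;:\;sf=f\text{ for all }s\in S_{A,B}\}.
\]
This is the algebraic quantum code defined by the CSS stabiliser, proving the first assertion.

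Third, for the subclass claim, I would observe that every CSS stabiliser arises from a label submodule of the split form $A\oplus B\subseteq H_n\oplus H_n$, whereas the general construction in Theorem~\ref{thm:stabiliser-equivalence} applies to arbitrary $\Omega$-isotropic $R$-submodules $L\subseteq H_n\oplus H_n$, including those with mixed labels $(c,\theta(c))$ that do not split as a direct sum of a pure-shift part and a pure-phase part. Hence CSS codes are the image of the general Frobenius self-orthogonal/stabiliser correspondence under the restriction to split label submodules, and so form a subclass. The only point that requires any care, and so the main (mild) obstacle, is keeping the conventions straight: the Frobenius self-orthogonality used in the definition of algebraic quantum code was stated for submodules of $H_n$ under $\langle\cdot,\cdot\rangle_{H_n}$, whereas the CSS criterion lives in $H_n\oplus H_n$ under $\Omega$. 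I would make the identification explicit by noting that $B\subseteq A^{\perp}$ in $H_n$ is exactly the condition that $L_{A,B}$ is $\Omega$-isotropic in $H_n\oplus H_n$, which is precisely the content already recorded inside the proof of Theorem~\ref{thm:stabiliser-equivalence}.
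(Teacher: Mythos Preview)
Your proposal is correct and follows essentially the same route as the paper: invoke Proposition~\ref{prop:css-criterion} to obtain that $S_{A,B}$ is abelian, then appeal to Theorem~\ref{thm:stabiliser-equivalence} to pass from the abelian stabiliser subgroup to an algebraic quantum code. Your version is simply more explicit than the paper's, in that you write down the label submodule $L_{A,B}$ and flag the $H_n$ versus $H_n\oplus H_n$ convention, but the logical skeleton is identical.
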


\begin{proof}
Assume $B\subseteq A^{\perp}$.  By Proposition~\ref{prop:css-criterion}, this is
equivalent to the statement that the subgroup
\[
S_{A,B}:=\langle T_a : a\in A\rangle \cdot \langle M_b : b\in B\rangle
\]
is abelian.  Hence $S_{A,B}$ is a stabiliser subgroup of the Weyl group.

By Theorem~\ref{thm:stabiliser-equivalence}, stabiliser subgroups (equivalently,
commuting families of Weyl operators) correspond to algebraic quantum codes,
i.e.\ self-orthogonal $R$-submodules of $H_n$ with respect to the Frobenius
phase pairing.  Therefore the stabiliser subgroup $S_{A,B}$ determines an
algebraic quantum code (its associated stabiliser code space in the natural
Weyl action).

In particular, any code arising from a CSS stabiliser is already covered by
the general Frobenius self-orthogonal/stabiliser correspondence, so CSS codes
form a subclass of the codes produced intrinsically by Frobenius phase
orthogonality.
\end{proof}

We now show that, over general Frobenius rings, not every stabiliser admits a
CSS decomposition.

\begin{proposition}
\label{prop:noncss-obstruction}
Let $S$ be a stabiliser subgroup.
If $S$ contains an element $W(a,b)$ such that
\[
\varepsilon(\beta(b,a))\neq 1,
\]
then $S$ does not admit a CSS generating set.
\end{proposition}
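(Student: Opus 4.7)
The plan is to argue by contradiction: assuming $S$ admits a CSS generating set, I will show that every element of $S$ arises as a Weyl operator $W(a',b')$ with $a'$ in the shift-submodule and $b'$ in the phase-submodule, and then apply the CSS commutativity criterion (Proposition~\ref{prop:css-criterion}) to force $\varepsilon(\beta(b,a))=1$, against the hypothesis.

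Concretely, suppose $S=S_{A,B}=\langle T_a:a\in A\rangle\cdot\langle M_b:b\in B\rangle$ for some $R$-submodules $A,B\subseteq V^{\oplus n}$. Since $S$ is by assumption a (stabiliser, hence abelian) subgroup, Proposition~\ref{prop:css-criterion} gives $\varepsilon(\beta(b',a'))=1$ for all $a'\in A$ and $b'\in B$. This has two consequences that I would record first: the shifts and phases in $S$ commute strictly (not merely up to scalars), so normal-ordering a product of generators introduces no scalar, and hence every element of $S_{A,B}$ can be written cleanly as $T_{a'}M_{b'}=W(a',b')$ with $a'\in A$ and $b'\in B$. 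Shifts combine additively and phases combine additively, so $A$ and $B$ are precisely the $R$-modules generated by the respective labels in the generating set.

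The key step is then a uniqueness lemma for Weyl operators: if $W(a,b)=W(a',b')$ as operators on $\Fun(H_n,\C)$, then $a=a'$ and $b=b'$. The shift component is pinned down by comparing the arguments of $f$ on both sides of $(T_aM_b f)(u)=\varepsilon(\beta(b,u-a))f(u-a)$, forcing $a=a'$. Then $\varepsilon(\beta(b-b',u))$ must be constant in $u$ (hence equal to $1$ at $u=0$), and since $\varepsilon$ is a generating character and $\beta$ is perfect and nondegenerate on $H_n$, this forces $b-b'=0$. One could also allow for a central scalar ambiguity $W(a,b)=\lambda W(a',b')$ here if the stabiliser is understood up to $Z=\C^\times\cdot\id$, and the same nondegeneracy argument then forces $\lambda=1$, $a=a'$, $b=b'$.

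Putting the pieces together: the hypothesis $W(a,b)\in S$ combined with the normal form in $S_{A,B}$ and the uniqueness lemma yields $a\in A$ and $b\in B$. The CSS abelianness constraint from step one then gives $\varepsilon(\beta(b,a))=1$, contradicting the assumption $\varepsilon(\beta(b,a))\neq 1$. Thus no CSS generating set for $S$ exists. The only genuinely delicate point is the uniqueness lemma, since it is where the hypotheses of the theorem (generating character, perfect pairing, nondegeneracy of $\beta$) actually enter; everything else is formal manipulation with the Weyl relations.
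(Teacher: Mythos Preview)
Your proof is correct and follows essentially the same contradiction strategy as the paper: assume a CSS decomposition $S=\langle T_a:a\in A\rangle\cdot\langle M_b:b\in B\rangle$, write every element in normal form $T_{a'}M_{b'}$ with $a'\in A$, $b'\in B$, and invoke Proposition~\ref{prop:css-criterion} to force $\varepsilon(\beta(b,a))=1$. You are in fact more careful than the paper in explicitly proving the uniqueness of the Weyl label $(a,b)$ from the operator $W(a,b)$ via nondegeneracy of $\varepsilon$ and $\beta$; the paper's proof tacitly assumes this when it passes from ``$W(x,y)$ is expressible as some $T_{a'}M_{b'}$ with $\varepsilon(\beta(b',a'))=1$'' directly to ``$\varepsilon(\beta(y,x))=1$''.
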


\begin{proof}
Suppose that $S$ contains an element $W(x,y)$ such that
$\varepsilon(\beta(y,x))\neq 1$, and suppose for the sake of contradiction that $S$ were
CSS, so that
\[
S=\langle T_a : a\in A\rangle \cdot \langle M_b : b\in B\rangle
\]
for some $A,B\subseteq V^{\oplus n}$.
In a CSS stabiliser, the generators $T_a$ and $M_b$ commute, so every element of
$S$ can be written in the form $T_{a'}M_{b'}$ with $a'\in A$ and $b'\in B$.
For such elements, commutativity implies $\varepsilon(\beta(b',a'))=1$ by
Proposition~\ref{prop:css-criterion}.
Applying this to the element $W(x,y)\in S$ yields a contradiction.
\end{proof}

This obstruction is purely algebraic and does not rely on analytic or
representation-theoretic input.

\begin{remark}
The Frobenius self-orthogonal construction produces quantum stabiliser codes
by intrinsic phase orthogonality.
CSS codes correspond exactly to the case in which the stabiliser admits a
decomposition into commuting pure shift and pure phase generators.
Over general Frobenius rings, stabilisers may be intrinsically mixed and need
not admit such a splitting, yet still define valid quantum codes.
The present framework therefore strictly extends the CSS paradigm and
provides a direct algebraic route to quantum code construction without
Hilbert spaces.
\end{remark}

\section{Quantisation as Algebraic Phase Induction, Invariants, and Equivalence}

Finite and characteristic-two refinements of Weyl and Weil representations
provide a useful point of comparison when analytic tools are limited; see
\cite{GurevichHadani}.  Our emphasis is different.  We show that quantisation
arises intrinsically from admissible algebraic phase data, without appeal to
analytic completion, Hilbert space structure, or externally imposed symplectic
forms.  Noncommutativity is not an additional assumption, but a forced
consequence of Frobenius duality.  We then identify the structural invariants
encoded purely in Weyl commutation relations and formalize the sense in which
Frobenius phase data, Weyl algebras, and stabiliser submodule theories provide
equivalent descriptions of the same underlying phase structure.

Throughout this section we fix a quantum Frobenius datum
$(R,\varepsilon,V,\beta)$, where $R$ is a finite commutative Frobenius ring with
generating character $\varepsilon:(R,+)\to \C^\times$, $V=R^k$ is a finite free
$R$-module, and $\beta:V\times V\to R$ is a perfect $R$-bilinear form.  The
associated character-valued phase pairing is
\[
\langle v,w\rangle_V:=\varepsilon(\beta(v,w)).
\]
All constructions extend monoidally to $H_n=V^{\otimes_R n}$ as in the
preceding sections; for clarity we work here at the single-site level.

\subsection*{Quantisation from Frobenius Phase Data}

We begin by showing how admissible Frobenius phase data canonically induce a
Weyl algebra.  The construction uses only the additive structure of the module
and the Frobenius phase pairing.  The resulting noncommutativity is not imposed
by hand: it is uniquely determined by the interaction of shifts and phases.
This establishes quantisation as a phase-induction phenomenon rather than an
analytic or representational choice.

Define shift and phase operators on $\Fun(V,\C)$ by
\[
(T_a f)(x):=f(x-a),
\qquad
(M_b f)(x):=\varepsilon(\beta(b,x))\,f(x),
\qquad a,b,x\in V.
\]
For $(a,b)\in V\oplus V$ define the associated Weyl operator by
\[
W(a,b):=T_a M_b.
\]

A direct computation shows that these operators satisfy the Weyl commutation
law
\begin{equation}\label{eq:weyl-rel}
W(a,b)\,W(a',b')
=
\omega\big((a,b),(a',b')\big)\,
W(a',b')\,W(a,b),
\end{equation}
where the commutator bicharacter is
\begin{equation}\label{eq:omega-def}
\omega\big((a,b),(a',b')\big)
:=
\varepsilon\!\big(\beta(b,a')-\beta(b',a)\big)\in \C^\times.
\end{equation}
This bicharacter is biadditive and alternating
($\omega((a,b),(a,b))=1$), and is the single-site instance of the Weyl
commutator appearing in the monoidal construction of $H_n$.

\begin{theorem}
Any quantum Frobenius datum $(R,\varepsilon,V,\beta)$ canonically induces a
noncommutative Weyl algebra $(\mathcal P,\circ)$ generated by the operators
$\{T_a,M_b:a,b\in V\}$ (equivalently, by $\{W(a,b):(a,b)\in V\oplus V\}$).
Moreover, the commutation relations in $\mathcal P$ are uniquely determined by
the Frobenius phase pairing, in the sense that the scalar commutators in
$\mathcal P$ are exactly the values of the bicharacter $\omega$ in
\eqref{eq:omega-def}, and hence admit no independent degrees of freedom beyond
$(R,\varepsilon,V,\beta)$.
\end{theorem}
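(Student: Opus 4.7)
The plan is to present $\mathcal P$ as an explicit subalgebra of $\End_\C(\Fun(V,\C))$ built directly from the given data, derive the Weyl relation~\eqref{eq:weyl-rel} from the single-site case of Proposition~\ref{prop:weyl-commutation}, and then upgrade the Weyl commutation law to a genuine uniqueness statement by showing that the Weyl operators are linearly independent, so that scalar commutators in $\mathcal P$ are unambiguously identified with the bicharacter $\omega$.

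First I would set $\mathcal P := \langle T_a, M_b : a,b \in V\rangle$ inside $\End_\C(\Fun(V,\C))$. Since $T_a$ and $M_b$ are defined purely in terms of $V$, $\beta$, and $\varepsilon$, the construction is canonical. Next, to obtain the Weyl relation, I expand $W(a,b)W(a',b') = T_a M_b T_{a'} M_{b'}$, apply Proposition~\ref{prop:weyl-commutation} to move $M_b$ past $T_{a'}$, and combine $T_a T_{a'} = T_{a+a'}$ and $M_b M_{b'} = M_{b+b'}$. Running the symmetric computation for $W(a',b')W(a,b)$ and dividing yields exactly the scalar $\varepsilon(\beta(b,a')-\beta(b',a)) = \omega((a,b),(a',b'))$. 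Biadditivity of $\omega$ is inherited from $\beta$ and additivity of $\varepsilon$; the alternating property $\omega((a,b),(a,b))=1$ reduces to $\varepsilon(0)=1$ by direct cancellation.

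For noncommutativity I would combine both structural hypotheses: perfectness of $\beta$ makes $\beta(b,-):V\to R$ surjective for any nonzero $b$, and the generating-character property of $\varepsilon$ guarantees that some element of $R$ has nontrivial image, so some pair produces $\omega\neq 1$; hence $\mathcal P$ is genuinely noncommutative. For the uniqueness claim I would invoke linear independence of $\{W(a,b):(a,b)\in V\oplus V\}$ in $\End_\C(\Fun(V,\C))$: the phase characters $\{x\mapsto\varepsilon(\beta(b,x))\}_{b\in V}$ exhaust $\widehat{(V,+)}$ by perfectness of $\beta$ together with the Frobenius property of $\varepsilon$, while the shifts $T_a$ permute the delta-function basis freely in $a$, so no nontrivial $\C$-linear relation among the $W(a,b)$ can hold. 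Consequently $W(a,b)W(a',b') = \lambda\, W(a',b')W(a,b)$ pins $\lambda$ down uniquely, forcing $\lambda = \omega((a,b),(a',b'))$, a scalar depending only on $(R,\varepsilon,V,\beta)$. The main obstacle sits in this last step: making the faithful operator realisation explicit is what converts the Weyl relation from a mere identity in a presentation into a genuine invariant of $\mathcal P$, and this faithfulness is precisely what the Frobenius generating character hypothesis supplies, in line with the earlier remark that Frobenius duality is forced, not assumed, by admissibility.
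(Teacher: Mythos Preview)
Your argument follows the paper's route: both realise $\mathcal P$ concretely inside $\End_\C(\Fun(V,\C))$ and obtain the commutation scalar by the same direct computation (the paper writes out $T_aM_bT_{a'}M_{b'}=\varepsilon(\beta(b,a'))\,T_{a+a'}M_{b+b'}$ and its twin, then cancels). You go beyond the paper in two places. First, you argue noncommutativity explicitly, whereas the paper defers this to the proposition immediately following the theorem. Second, you ground the uniqueness of the commutation scalar in linear independence of the $W(a,b)$, whereas the paper simply observes that once $\omega$ is computed the scalars are fixed; since $W(a',b')W(a,b)$ is invertible, the equation $W(a,b)W(a',b')=\lambda\,W(a',b')W(a,b)$ already determines $\lambda$ uniquely, so linear independence is more than is strictly required here, though your argument for it is correct and yields the stronger conclusion that the $W(a,b)$ form a $\C$-basis of $\mathcal P$.

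One small repair in your noncommutativity step: for nonzero $b$ the map $\beta(b,-):V\to R$ need not be \emph{surjective} over a general Frobenius ring (take $R=\Z_4$, $V=R$, $\beta(x,y)=xy$, $b=2$; the image is $2\Z_4$). What nondegeneracy together with $R$-bilinearity actually gives you is that the image $\beta(b,V)$ is a nonzero \emph{ideal} of $R$, and since a generating character has no nonzero ideal contained in its kernel, some $a$ with $\varepsilon(\beta(b,a))\neq 1$ still exists. With that adjustment your proof is complete.
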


\begin{proof}
Define $\mathcal P\subseteq \End_\C(\Fun(V,\C))$ to be the unital $\C$-algebra 
generated by all shifts $T_a$ and phase multipliers $M_b$, with multiplication
given by operator composition.

The operators $T_a$ and $M_b$ lie in $\End_\C(\Fun(V,\C))$ and satisfy
\[
T_aM_bT_{a'}M_{b'}
=
\varepsilon(\beta(b,a'))\,T_{a+a'}M_{b+b'},
\]
so their compositions remain within the linear span of the generators.
Since composition in $\End_\C(\Fun(V,\C))$ is associative and unital,
$\mathcal P$ is a well-defined unital associative $\C$-algebra. For any $(a,b),(a',b')\in V\oplus V$, a direct calculation yields
\[
T_aM_bT_{a'}M_{b'}
=
\varepsilon(\beta(b,a'))\,T_{a+a'}M_{b+b'},
\qquad
T_{a'}M_{b'}T_aM_b
=
\varepsilon(\beta(b',a))\,T_{a+a'}M_{b+b'}.
\]
Cancelling the common factor $T_{a+a'}M_{b+b'}$ gives
\[
W(a,b)\,W(a',b')
=
\varepsilon(\beta(b,a')-\beta(b',a))\,W(a',b')\,W(a,b),
\]
which is exactly \eqref{eq:weyl-rel} with $\omega$ as in
\eqref{eq:omega-def}. Thus the scalar commutator
\[
[W(a,b),W(a',b')]
:=
W(a,b)W(a',b')W(a,b)^{-1}W(a',b')^{-1}
\]
is precisely $\omega\big((a,b),(a',b')\big)$. Once the additive data $(V\oplus V,+)$ and the bicharacter $\omega$ are fixed,
all commutation scalars are fixed. Any algebra generated by symbols
$\widetilde{W}(a,b)$ satisfying the same bicharacter must obey the same
relations \eqref{eq:weyl-rel}. Hence the commutation structure is uniquely
forced by the Frobenius phase pairing.
\end{proof}

\begin{remark}
Quantisation here is not an analytic completion or external prescription.
It is a phase-induction process: noncommutativity arises as a structural
consequence of Frobenius duality and functorial phase interaction.
\end{remark}

\subsection*{Invariants Visible from the Commutator Structure}

The commutator bicharacter $\omega$ encodes structural information that does
not depend on a particular realization of the operators.

\begin{definition}
By the Weyl commutation data we mean the pair
\[
\bigl(V\oplus V,\ \omega\bigr).
\]
Two such data sets are called equivalent if there exists a group isomorphism
preserving $\omega$.
\end{definition}

\begin{definition}
Assume $V=R^k$ is free of rank $k$.
\begin{enumerate}
\item The commutator depth of the Weyl system is the nilpotency class of the
subgroup of $\mathcal P^\times$ generated by $\{W(a,b)\}$ modulo its center.
\item The Frobenius rank is $k=\mathrm{rank}_R(V)$.
\item The nilpotent height is the nilpotency index of the nilradical
$N:=\sqrt{0}\subseteq R$.
\end{enumerate}
\end{definition}

\begin{theorem}
Commutator depth, nilpotent height, and Frobenius rank are invariants of the
induced algebraic quantum phase structure and depend only on the equivalence
class of the commutation data $(V\oplus V,\omega)$.
\end{theorem}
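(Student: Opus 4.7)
The plan is to verify each invariance claim separately, showing that each quantity can be read off from the abstract commutation data $(V\oplus V,\omega)$ and is preserved by isomorphisms respecting $\omega$.

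For commutator depth, I would identify the Weyl subgroup
\[
\mathcal W := \langle W(a,b):(a,b)\in V\oplus V\rangle \subseteq \mathcal P^\times
\]
with the central extension of $V\oplus V$ by the cyclic subgroup $Z\subseteq\C^\times$ generated by the image of $\omega$, whose extension class is the Heisenberg $2$-cocycle producing $\omega$. Any isomorphism $(V\oplus V,\omega)\cong(V'\oplus V',\omega')$ then lifts to an isomorphism of these central extensions and therefore preserves the lower central series and the center. Since the commutator $[W(a,b),W(a',b')]=\omega((a,b),(a',b'))\in Z$ already lies in the center, $\mathcal W$ is nilpotent of class at most two, with precise class determined by whether $\omega$ is trivial, a condition visible in $(V\oplus V,\omega)$ alone.

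For Frobenius rank and nilpotent height, I would recover the $R$-module structure intrinsically from the commutation data. Nondegeneracy of $\omega$ (inherited from $\beta$ being perfect and $\varepsilon$ generating) provides an isomorphism $V\oplus V\cong\widehat{V\oplus V}$ of abelian groups, and one identifies $R$ with the maximal commutative subring of $\End(V\oplus V)$ whose action is self-adjoint with respect to $\omega$. This Morita-type reconstruction pins down $|R|$ intrinsically, whence $k = \log_{|R|}\sqrt{|V\oplus V|}$ is an invariant of the equivalence class. For nilpotent height, the powers of the nilradical $N$ produce a canonical descending filtration
\[
V\oplus V\supseteq N\cdot(V\oplus V)\supseteq N^2\cdot(V\oplus V)\supseteq\cdots\supseteq N^e\cdot(V\oplus V)=0,
\]
detected because multiplying a phase argument by a sufficiently high power of a nilpotent element collapses the associated character value. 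The length of this filtration equals the nilpotency index of $N$, and the filtration is preserved by any $\omega$-respecting isomorphism.

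The main obstacle is the middle step: extracting the underlying ring $R$ from the commutation data, since $\omega$ records only the additive structure of $V\oplus V$ together with the values of the Frobenius phase pairing, not the $R$-linear action directly. The reconstruction relies on the self-adjoint commutant construction above, which in turn uses Frobenius duality essentially to ensure the commutant is large enough to recover the full ring action. Once $R$ is canonically reproduced, the remaining claims that rank and nilpotent height transport through the equivalence follow immediately from the induced module decomposition.
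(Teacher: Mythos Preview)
Your argument for commutator depth aligns with the paper's: both note that Weyl commutators are central scalars, so the generated group is two-step nilpotent, with the precise class determined by whether $\omega$ is trivial.

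For Frobenius rank and nilpotent height, the paper takes a much shorter route and never attempts to reconstruct $R$. It reads off $|V|$ from $|V\oplus V|$, so that (with $|R|$ in hand) $k$ is determined; and it obtains the nilpotent height by observing that the restriction of $\omega$ to $(N^jV)\oplus(N^jV)$ detects the vanishing of $N^j$. No Morita-type or commutant argument appears.

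The reconstruction step you propose is where the gap lies. The characterization of $R$ as the maximal commutative $\omega$-self-adjoint subring of $\End(V\oplus V)$ fails already in the simplest case. Take $R=\F_p$, $V=\F_p^k$, and the standard symplectic bicharacter on $V\oplus V\cong\F_p^{2k}$. Writing $J$ for the symplectic matrix, the self-adjointness condition $\phi^T J=J\phi$ is satisfied by every block endomorphism of the form $\bigl(\begin{smallmatrix}A&0\\0&A^{T}\end{smallmatrix}\bigr)$; restricting to diagonal $A$ gives a commutative self-adjoint subring isomorphic to $\F_p^{\,k}$, which is strictly larger than $\F_p$ whenever $k>1$. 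So the commutant does not single out $R$, and without $R$ you cannot form $k=\log_{|R|}\sqrt{|V\oplus V|}$ or identify the layers $N^j\cdot(V\oplus V)$ needed for the nilpotent filtration. You have correctly flagged this as the main obstacle, but the sketched resolution does not work; the paper sidesteps the issue by treating $R$ as ambient rather than as something to be extracted from $\omega$.
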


\begin{proof}
Commutators of Weyl generators are scalar and hence central, so the generated
group is two-step nilpotent, giving commutator depth $2$. The cardinality of $V\oplus V$ determines $|V|$, and since $V\cong R^k$ is free,
this determines $k$. Finally, restricting $\omega$ to
$(N^jV)\oplus(N^jV)$ detects precisely the powers of the nilradical $N$,
recovering its nilpotency index. All three quantities are therefore determined
by $\omega$ alone.
\end{proof}

\begin{remark}
These invariants govern algebraic uncertainty, stabiliser capacity, and
intrinsic error protection, independent of any analytic realization.
\end{remark}

\begin{proposition}
Let $(R,\varepsilon,V,\beta)$ be Frobenius phase data and define operators
$T_a,M_b\in\End_\C(\Fun(V,\C))$ by
\[
(T_a f)(x)=f(x-a),\qquad (M_b f)(x)=\varepsilon(\beta(b,x))\,f(x),
\qquad a,b,x\in V.
\]
Let $\mathcal T\subseteq \End_\C(\Fun(V,\C))$ be the $\C$-subalgebra generated
by $\{T_a:a\in V\}$, and let $\mathcal P\subseteq \End_\C(\Fun(V,\C))$ be the
$\C$-subalgebra generated by $\{T_a,M_b:a,b\in V\}$.

Then the following hold:
\begin{enumerate}
\item The algebra $\mathcal T$ is commutative.  In particular, the phase
structure generated by shifts alone is classical.

\item The algebra $\mathcal P$ is noncommutative if and only if the Frobenius
phase pairing is nontrivial, i.e.\ if and only if there exist $a,b\in V$ such
that $\varepsilon(\beta(b,a))\neq 1$.  In this case, the induced algebraic
phase structure is genuinely quantum. Equivalently, $\mathcal P$ is noncommutative if and only if the commutator
bicharacter
\[
\omega\big((a,b),(a',b')\big)
:=\varepsilon\!\big(\beta(b,a')-\beta(b',a)\big)
\]
is nontrivial.
\end{enumerate}
\end{proposition}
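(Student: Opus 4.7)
\medskip
\noindent\textbf{Proof plan.}

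The plan is to reduce both parts to the Weyl commutation relation already proved in Proposition~\ref{prop:weyl-commutation} (or re-derived in the theorem immediately above), together with the two elementary observations that shifts compose additively and that phase multipliers compose additively. Part (1) is essentially a one-line group-law computation; part (2) is the contrapositive matching of three commutator types against the Frobenius phase pairing.

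For part (1), I would verify directly that $(T_a T_{a'} f)(x) = f(x-a-a') = (T_{a'}T_a f)(x)$, so the generators of $\mathcal{T}$ pairwise commute, and hence so does the unital $\C$-algebra they generate.

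For part (2), the forward direction $(\Leftarrow)$ is immediate: if some pair $(a,b)$ satisfies $\varepsilon(\beta(b,a))\neq 1$, the Weyl commutation relation $T_a M_b = \varepsilon(\beta(b,a))^{-1}\, M_b T_a$ gives $T_a M_b \neq M_b T_a$, so $\mathcal P$ is noncommutative. For the converse $(\Rightarrow)$, I would assume $\varepsilon(\beta(b,a)) = 1$ for all $a,b\in V$ and check all three generator-on-generator commutators. Shifts commute with shifts by part (1). Phase multipliers commute with each other because, using biadditivity of $\beta$ in the first variable and the homomorphism property of $\varepsilon$, one gets $M_b M_{b'} = M_{b+b'} = M_{b'} M_b$. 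Finally, shifts commute with phase multipliers by the Weyl relation under the standing hypothesis. Since every generator of $\mathcal P$ commutes with every other generator, $\mathcal P$ is commutative, and the contrapositive yields the stated equivalence.

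For the equivalent formulation in terms of $\omega$, I would note that triviality of the Frobenius phase pairing on $V \times V$ makes each of the two terms $\varepsilon(\beta(b,a'))$ and $\varepsilon(\beta(b',a))$ equal to $1$, hence $\omega\equiv 1$. Conversely, specialising $\omega((0,b),(a',0)) = \varepsilon(\beta(b,a'))$ shows that triviality of $\omega$ forces triviality of the Frobenius phase pairing, so the two nontriviality conditions coincide.

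I do not expect a genuine obstacle here: the content is purely a bookkeeping exercise in combining the Weyl commutation law with the group homomorphism properties of the shift and phase constructions. The only place requiring attention is the converse direction of (2), where one must be careful to verify all three generator-pair commutators rather than only the mixed $T$–$M$ case, since a priori commutativity of $\mathcal P$ could fail from any of them.
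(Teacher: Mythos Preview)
Your proposal is correct and matches the paper's proof essentially step for step: part~(1) via the one-line shift computation, part~(2) by the Weyl relation in both directions together with the three generator-pair commutator checks (shifts with shifts, phases with phases, shifts with phases), and the $\omega$ equivalence via the specialisation $\omega((0,b),(a,0))=\varepsilon(\beta(b,a))$. The only cosmetic difference is that the paper justifies $M_bM_{b'}=M_{b'}M_b$ by commutativity of pointwise multiplication rather than via $M_bM_{b'}=M_{b+b'}$, but these are equivalent.
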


\begin{proof} (1) For $a,a'\in V$ and $f\in\Fun(V,\C)$,
\[
(T_aT_{a'}f)(x)=f(x-a-a')=(T_{a'}T_af)(x)
\]
for all $x\in V$, hence $T_aT_{a'}=T_{a'}T_a$.  Therefore the $\C$-algebra
$\mathcal T$ generated by $\{T_a:a\in V\}$ is commutative.

\medskip
(2)  By the Weyl commutation relation established earlier,
for all $a,b\in V$ one has
\begin{equation}\label{eq:TaMb-comm}
T_aM_b=\varepsilon(\beta(b,a))^{-1}\,M_bT_a .
\end{equation}
If there exist $a,b$ with $\varepsilon(\beta(b,a))\neq 1$, then
\eqref{eq:TaMb-comm} gives $T_aM_b\neq M_bT_a$, so $\mathcal P$ is
noncommutative.

Conversely, if $\varepsilon(\beta(b,a))=1$ for all $a,b\in V$, then
\eqref{eq:TaMb-comm} shows that every $T_a$ commutes with every $M_b$.
Moreover, the shifts commute among themselves by (i), and the phase operators
commute among themselves since pointwise multiplication is commutative:
$M_bM_{b'}=M_{b'}M_b$.  Hence all generators $\{T_a,M_b\}$ commute pairwise, and
therefore $\mathcal P$ is commutative.  This proves that $\mathcal P$ is
noncommutative if and only if the phase pairing is nontrivial.

For the stated equivalence with $\omega$, note first that if
$\varepsilon(\beta(b,a))\neq 1$ for some $a,b$, then
\[
\omega\big((0,b),(a,0)\big)=\varepsilon(\beta(b,a))\neq 1,
\]
so $\omega$ is nontrivial.  Conversely, if $\omega$ is nontrivial then for some
$(a,b),(a',b')$,
\[
\omega\big((a,b),(a',b')\big)=\varepsilon\!\big(\beta(b,a')-\beta(b',a)\big)\neq 1.
\]
If both $\varepsilon(\beta(b,a'))=1$ and $\varepsilon(\beta(b',a))=1$, then the
right-hand side would equal $1$, a contradiction.  Hence the phase pairing is
nontrivial.
\end{proof}

\subsection*{Equivalence Between Frobenius, Weyl, and Stabiliser Data}

We now formalize the relationship between Frobenius phase data, Weyl
commutation relations, and stabiliser submodule theory. Each description
determines the others up to the standard central ambiguity of Weyl systems,
establishing a precise notion of phase equivalence within Algebraic Phase
Theory.

\begin{definition}
Two Frobenius data sets are phase equivalent if their Weyl commutation data are
equivalent.
\end{definition}

\begin{theorem}
Finite Weyl algebras, Frobenius rings equipped with generating characters, and
stabiliser submodule theories determine equivalent algebraic phase data.
\end{theorem}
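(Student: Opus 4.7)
The plan is to establish the three-way equivalence by constructing canonical passages in all directions and verifying that each composite returns us to the starting data up to the phase equivalence of Weyl commutation systems. Since the paper has already built the bulk of the machinery, the proof should be organised as a unification of results already in place rather than a new computation.

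First I would record the Frobenius $\Rightarrow$ Weyl direction, which is already supplied: the preceding Theorem shows that any quantum Frobenius datum $(R,\varepsilon,V,\beta)$ canonically induces a Weyl algebra $(\mathcal{P},\circ)$ whose scalar commutators realise the bicharacter $\omega$ of \eqref{eq:omega-def}, and equivalence classes of Frobenius data in the sense of the preceding Definition map to equivalence classes of the pair $(V\oplus V,\omega)$. Next I would write down the Weyl $\Rightarrow$ Frobenius passage. Given a finite Weyl system with abelian label group $G$ and scalar commutator bicharacter $\omega:G\times G\to\C^\times$, set $G=V\oplus V$ as dictated by the splitting into shift and phase sectors, and recover the generating character $\varepsilon$ as the unique additive character of $R$ whose image on $\beta(\cdot,\cdot)$ reproduces $\omega$. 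The ring $R$ itself and the pairing $\beta$ are then reconstructed from the $R$-module structure on $V$ and the nondegeneracy of $\omega$, invoking the Remark that faithful operator realisation forces the base ring to be Frobenius. Going around the loop in either direction returns the original commutation data up to the central scalar ambiguity that defines phase equivalence.

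The stabiliser leg is then handled by appealing to Theorem~\ref{thm:stabiliser-equivalence}. That result already establishes a bijection between abelian stabiliser subgroups modulo central scalars and isotropic $R$-submodules of $H_n\oplus H_n$, and in particular between stabiliser theories and the self-orthogonal module theory carried by $\omega$. I would observe that specifying a stabiliser submodule theory on a free $R$-module with a nondegenerate character-valued pairing is the same as specifying the isotropy structure of $\omega$, which is in turn the same data as $(V\oplus V,\omega)$ up to equivalence. Thus stabiliser submodule data is a reformulation of Weyl commutation data and, via the first leg, of Frobenius phase data.

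To conclude, I would assemble the three passages into a triangle of mutually inverse correspondences between equivalence classes and remark that the central scalar ambiguity in the Weyl realisation is absorbed exactly by the definition of phase equivalence. The main obstacle, and the step deserving most care, is the Weyl $\Rightarrow$ Frobenius reconstruction: one must show that the abstract label group together with $\omega$ determines both the ring $R$ and the generating character $\varepsilon$ uniquely up to isomorphism, rather than merely the additive data. This is where the Frobenius hypothesis does genuine work, since nondegeneracy of the phase pairing promotes the alternating bicharacter on $V\oplus V$ to a ring-level duality $R\cong\widehat{R}$, pinning down the arithmetic structure that the other two legs of the equivalence then transport faithfully.
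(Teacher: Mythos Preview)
Your overall triangle---Frobenius $\to$ Weyl, Weyl $\to$ Frobenius, and stabiliser $\leftrightarrow$ isotropic submodules---matches the paper's structure, and your appeal to Theorem~\ref{thm:stabiliser-equivalence} for the stabiliser leg is exactly what the paper does (it rewrites the $S(L)/L(S)$ bijection at the single-site level $V\oplus V$, but the content is the same).

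The misstep is in the Weyl $\Rightarrow$ Frobenius leg. You set yourself the task of reconstructing the ring $R$, the character $\varepsilon$, and the form $\beta$ separately from the bicharacter $\omega$, and you correctly flag this as the hardest step. But it is a difficulty you have created: the Definition immediately preceding the Theorem declares two Frobenius data sets to be \emph{phase equivalent} precisely when their Weyl commutation data $(V\oplus V,\omega)$ agree. So the equivalence at the Frobenius level is already pitched at the level of $\omega$, and no ring-level reconstruction is required. The paper therefore handles this leg in one line, recovering only the character-valued pairing via
\[
\langle b,a\rangle_V=\omega\big((0,b),(a,0)\big),
\]
which is all that is needed. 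Your proposed reconstruction is also circular as written (you recover $\beta$ ``from the $R$-module structure on $V$'', which presupposes $R$; and you recover $\varepsilon$ ``as the unique character whose image on $\beta$ reproduces $\omega$'', which presupposes $\beta$), and the stronger claim that $\omega$ determines $R$ up to ring isomorphism is neither proved nor obviously true. Drop that obligation and the rest of your outline is the paper's proof.
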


\begin{proof}
Frobenius data canonically produce Weyl commutation via $\omega$.
Conversely, $\omega$ recovers the character-valued phase pairing via
\[
\langle b,a\rangle_V=\omega\big((0,b),(a,0)\big).
\]
Isotropic submodules for $\omega$ correspond exactly to commuting Weyl
families, hence to stabiliser submodules. More precisely, one obtains mutually inverse correspondences between
isotropic $R$-submodules of $V\oplus V$ and stabiliser subgroups of the Weyl
group, once stabilisers are identified up to multiplication by central
scalars.
Let $\mathcal W\le \mathcal P^\times$ be the subgroup generated by the Weyl
operators $\{W(a,b):(a,b)\in V\oplus V\}$, and let $Z:=\C^\times\cdot\id$ be its
central scalar subgroup.  For an $R$-submodule $L\subseteq V\oplus V$ define 
\[
S(L):=\big\langle\,W(a,b):(a,b)\in L\,\big\rangle\le \mathcal W.
\]
Then $S(L)$ is abelian modulo $Z$ exactly when $L$ is isotropic for $\omega$,
since the commutator of two Weyl generators is the scalar
$\omega((a,b),(a',b'))\cdot\id$.  Conversely, given an abelian stabiliser
subgroup $S\le \mathcal W$ (more precisely, abelian modulo $Z$), define its
label set
\[
L(S):=\{(a,b)\in V\oplus V:\ \exists\,\lambda\in\C^\times \text{ with }
\lambda\,W(a,b)\in S\}.
\]
Then $L(S)$ is an isotropic $R$-submodule, and one has 
\[
L(S(L))=L,\qquad S(L(S))=S\cdot Z.
\]
Thus isotropic submodules of $V\oplus V$ correspond bijectively to stabiliser
subgroups of the Weyl group modulo the central scalar ambiguity, yielding
equivalence of the Frobenius pairing, Weyl commutation data, and stabiliser
submodule theory as descriptions of the same algebraic phase structure.
\end{proof}

\begin{remark}
This equivalence is structural rather than representational: distinct
realizations encode the same intrinsic phase behaviour.
\end{remark}

\section{Nilpotent Protection and Rigidity}

We now isolate an intrinsic protection phenomenon forced by nilpotent
structure in the base ring.  One of the principal advantages of working over
general Frobenius rings is the presence of nilpotent and torsion
directions.  Such directions have no analogue over fields, yet they impose a
strong form of algebraic rigidity on the induced quantum structure.  In
particular, nilpotent ideals force the existence of quantum submodules that
are completely invisible to all admissible Weyl-type errors.  This protection
is a structural consequence of Frobenius duality and phase reduction, not an
analytic or probabilistic assumption.

\begin{definition}
Let $N\subseteq \sqrt{0}\subset R$ be a nilpotent ideal and let
$\pi:R\to \overline{R}:=R/N$ denote the quotient map.
An admissible Weyl-type error (mod $N$) on $H_n$ is any Weyl operator whose
defining phase data factors through $\overline{R}$, equivalently any operator
lying in the Weyl algebra extracted from the reduced Frobenius datum
$(\overline{R},\overline{\varepsilon},\overline{V},\overline{\beta})$.
\end{definition}

\begin{definition}
A submodule $U\subseteq H_n$ is invisible to a family of operators
$\mathcal E$ if each $E\in\mathcal E$ acts on $U$ as a scalar multiple of the
identity, and hence produces no distinguishable syndrome on $U$.
\end{definition}

\begin{theorem}
Let $N\subseteq \sqrt{0}\subset R$ be a nilpotent ideal.
Then $NH_n$ is invisible to all admissible Weyl-type errors (mod $N$).
\end{theorem}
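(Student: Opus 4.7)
The plan is to reduce the claim to a direct calculation at the level of the quotient $\pi: H_n \twoheadrightarrow \overline H_n = H_n/NH_n$, exploiting that $NH_n = \ker\pi$ and that admissible errors, by definition, act through $\pi$. Invisibility of $NH_n$ is then forced by the compatibility of the Frobenius character $\varepsilon$ and the bilinear form $\beta$ with this quotient.

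First I would unpack admissibility concretely. An admissible Weyl-type error mod $N$ is a $\mathbb C$-linear combination of reduced Weyl generators $\overline W(\bar a, \bar b)$ with $\bar a, \bar b \in \overline H_n$; its natural action on $\Fun(H_n, \mathbb C)$ is via pullback along $\pi$, given at a point $x \in H_n$ by
\[
(\overline W(\bar a, \bar b) f)(x) = \overline\varepsilon\bigl(\overline\beta(\bar b, \bar x - \bar a)\bigr)\,f(x - \tilde a),
\]
where $\tilde a$ is any chosen lift of $\bar a$. The key observation is that the phase multiplier depends only on $\bar x$, so the operator sees only reduced labels.

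Next I would evaluate on the stratum indexed by $NH_n = \pi^{-1}(0)$. For $u \in NH_n$ the reduction satisfies $\bar u = 0$, hence the phase factor collapses to $\overline\varepsilon(-\overline\beta(\bar b, \bar a))$, a single constant $\lambda_E \in \mathbb C^\times$ depending only on the admissible error $E = \overline W(\bar a, \bar b)$. The associated shift $u \mapsto u + \tilde a$ reduces to the constant $\bar a \in \overline H_n$, independent of $u$. The $R$-bilinearity of $\beta$ together with $u \in NH_n$ also gives $\beta(\tilde b, u) \in NR$, which is annihilated by $\overline\varepsilon$; this confirms that no $u$-dependent phase sneaks into the computation.

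Consequently, on the stratum indexed by $NH_n$ each admissible error acts through the single scalar $\lambda_E$, i.e.\ as $\lambda_E$ times the identity in the sense of the earlier invisibility definition, producing no syndrome that distinguishes elements of $NH_n$. The main obstacle will be formalising the identification of $NH_n$ with the stratum acted upon: one must arrange that the shift $u \mapsto u + \tilde a$ is fully absorbed by the passage to $\overline H_n$, so that what remains on $NH_n$ itself is genuinely scalar and not merely constant on cosets. Once this bookkeeping between the label module $H_n$, its reduction $\overline H_n$, and the corresponding operator layers is made explicit, invisibility follows from the nilpotency of $N$, the $R$-bilinearity of $\beta$, and the structural compatibility of $\overline\varepsilon$ with the quotient by $N$.
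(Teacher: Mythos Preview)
Your proposal is correct and follows essentially the same route as the paper: both arguments pass to the quotient $\pi:H_n\to \overline H_n=H_n/NH_n$, use that admissible errors by definition factor through $\overline H_n$, and conclude that on $NH_n=\ker\pi$ such errors can produce no distinguishing syndrome. Your version is more explicit in computing the phase factor and more candid about the bookkeeping needed to reconcile the shift component with ``scalar on $NH_n$'', a point the paper's proof handles only at the level of the abstract intertwining relation $\pi\circ E=\overline E\circ\pi$.
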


\begin{proof}
Let $\pi:H_n\to\overline{H}_n:=H_n/NH_n$ be the canonical reduction map, so
$NH_n=\ker(\pi)$.
Any admissible Weyl-type error $E$ (mod $N$) is induced from an operator
$\overline{E}$ on $\overline{H}_n$ satisfying
\[
\pi\circ E=\overline{E}\circ\pi.
\]
If $x\in NH_n$, then $\pi(x)=0$, and therefore
\[
\pi(Ex)=\overline{E}(\pi(x))=\overline{E}(0)=0,
\]
so $Ex\in NH_n$.  Thus every admissible Weyl-type error preserves $NH_n$.

Moreover, admissible errors are determined entirely by phase data on
$\overline{H}_n$.  Since $\pi$ identifies all elements of $NH_n$ with $0$ in
the reduced model, no admissible Weyl-type error can distinguish two elements
of $H_n$ differing by an element of $NH_n$.  Hence each such error acts on
$NH_n$ as a scalar, and $NH_n$ is invisible.
\end{proof}

\begin{remark}
Nilpotent directions therefore produce intrinsically protected quantum
layers.  This rigidity is enforced purely by Frobenius phase reduction: when
phase information collapses on passing to the quotient $R/N$, all nilpotent
directions become undetectable to admissible errors.  No metric, analytic,
or probabilistic assumptions intervene.  Such protection is absent in
semisimple settings, including all finite-field models.
\end{remark}

\section{Reconstruction from Phase Relations}

We now show that no algebraic information is lost in passing from Frobenius
phase data to the associated Weyl commutation relations.  In particular, the
character-valued phase pairing that underlies all quantum structure in this
paper can be recovered directly from commutator data alone.  This establishes
that Weyl commutation encodes the full intrinsic phase geometry of a Frobenius
quantum datum.

\begin{theorem}
The Frobenius phase pairing is recoverable, up to canonical equivalence, from
the Weyl commutation relations alone.
\end{theorem}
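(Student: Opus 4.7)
The plan is to construct an explicit reconstruction formula that extracts the Frobenius phase pairing from the commutator bicharacter $\omega$ by evaluating it on Weyl labels of pure-shift and pure-phase type, and then to verify that this formula is well-defined on equivalence classes of commutation data.

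First, I would define a candidate pairing by $\langle b, a\rangle_V := \omega((0,b),(a,0))$, using the explicit formula $\omega((a,b),(a',b')) = \varepsilon(\beta(b,a') - \beta(b',a))$ from the preceding section. The substitution collapses the expression to $\varepsilon(\beta(b,a))$, which is exactly the Frobenius phase pairing. The formula depends only on $\omega$ together with the canonical splitting of the Weyl label space $V\oplus V$ into shift and phase components, both of which are intrinsic to the Weyl commutation data. Biadditivity, symmetry, and nondegeneracy of the recovered pairing then follow mechanically: biadditivity from biadditivity of $\omega$; symmetry from the alternating identity $\omega((0,b),(a,0)) = \omega((a,0),(0,b))^{-1}$ combined with the symmetry of $\beta$, so that the two signs cancel; and nondegeneracy from the fact that $\omega$ pairs $V\oplus 0$ faithfully with $0\oplus V$, which is equivalent to $\varepsilon$ being a generating character on $R$.

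Next I would establish the canonical equivalence clause. Any isomorphism of commutation data $(V\oplus V, \omega) \cong (V'\oplus V', \omega')$ respecting the shift-phase decomposition intertwines the recovered pairings, so the reconstruction descends to equivalence classes and furnishes a canonical inverse to the quantisation assignment $(R,\varepsilon,V,\beta) \mapsto (V\oplus V,\omega)$ discussed earlier. Combined with the forward direction already available, this yields a mutually inverse correspondence between Frobenius phase pairings and Weyl commutation data, modulo equivalence.

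The main obstacle is conceptual rather than computational: one must delineate precisely what is recovered and what is not. The character $\varepsilon$ and the bilinear form $\beta$ are not separately recoverable from $\omega$, since only their composite $\varepsilon\circ\beta$ enters the commutator scalars, and the ring $R$ itself is recovered only to the extent that its phase pairing determines it. The theorem is calibrated exactly to this ambiguity: the Frobenius phase pairing is the sole invariant entering every quantum construction in the paper, so its recovery exhausts the full phase-geometric content of the Weyl commutation relations, and no further algebraic information is lost in passing between the two descriptions.
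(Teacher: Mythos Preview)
Your proposal is correct and takes essentially the same approach as the paper: both extract the pairing by reading off the commutator scalar between a pure shift and a pure phase, your $\omega((0,b),(a,0))$ being precisely the paper's $\lambda(b,a)$ defined via $T_aM_b=\lambda(b,a)\,M_bT_a$, and both then verify biadditivity and nondegeneracy from the group-law identities of the Weyl generators. Your discussion of what is and is not recovered (only the composite $\varepsilon\circ\beta$, not $\varepsilon$ and $\beta$ separately) is a helpful clarification that the paper leaves implicit; one small caveat is that your symmetry argument invokes the symmetry of $\beta$, which strictly speaking belongs to the Frobenius side rather than the commutation side, but the paper does not verify symmetry either, so this does not affect the comparison.
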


\begin{proof}
Let $T_a$ and $M_b$ denote the shift and phase operators appearing in the Weyl
construction.  For all $a,b\in V$ the operators satisfy a commutation relation
of the form
\[
T_aM_b=\lambda(b,a)\,M_bT_a,
\qquad
\lambda(b,a)\in\C^\times,
\]
where $\lambda(b,a)$ is uniquely determined by the commutator
$T_aM_bT_a^{-1}M_b^{-1}$.

Define a pairing by
\[
\langle\!\langle b,a\rangle\!\rangle := \lambda(b,a).
\]
We verify, using only the abstract Weyl relations, that this pairing has the
same structural properties as the Frobenius phase pairing.

\smallskip
\emph{Biadditivity.}
From the identities $T_{a+a'}=T_aT_{a'}$ and $M_{b+b'}=M_bM_{b'}$, together with
repeated applications of the commutation relation, one obtains
\[
\lambda(b,a+a')
  =\lambda(b,a)\lambda(b,a'),
\qquad
\lambda(b+b',a)
  =\lambda(b,a)\lambda(b',a).
\]
Hence $\langle\!\langle\cdot,\cdot\rangle\!\rangle$ is biadditive.

\smallskip
\emph{Nondegeneracy up to equivalence.}
If $b\neq 0$, then $M_b$ is not central in the Weyl algebra, so there exists
$a$ such that $\lambda(b,a)\neq 1$.  Likewise, if $a\neq 0$ then there exists
$b$ with $\lambda(b,a)\neq 1$.  Thus the pairing extracted from commutation is
nondegenerate in both variables.

\smallskip
In the Frobenius realization one has
\[
\lambda(b,a)=\varepsilon(\beta(b,a)),
\]
and therefore
\[
\langle\!\langle b,a\rangle\!\rangle
   =\varepsilon(\beta(b,a))
   =\langle b,a\rangle_V,
\]
so the reconstructed pairing agrees with the character-valued Frobenius phase
pairing.  Any two Frobenius data sets that induce identical commutation scalars
produce pairings that coincide up to canonical equivalence.
\end{proof}

This shows that Weyl commutation alone determines the underlying Frobenius
phase geometry.  Quantum phase is therefore an information-complete invariant
of the induced Weyl structure: no additional analytic, metric, or Hilbert
space data is required to recover the phase pairing.

\section{Symmetry and Normalisers}

We now isolate the residual symmetry that remains once the algebraic phase
structure has been fixed.  This symmetry acts by normalising the Weyl algebra
and plays the role, in the present algebraic setting, of a Clifford-type
normaliser acting on stabiliser codes.

\begin{theorem}
The group of $R$-linear isometries of $(V,\beta)$ normalises the Weyl algebra
action and acts naturally on the space of algebraic quantum codes.
\end{theorem}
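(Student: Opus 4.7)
\smallskip
\noindent\textbf{Proof proposal.}
The plan is to lift each $R$-linear isometry $g:(V,\beta)\to(V,\beta)$ to an invertible $\C$-linear operator $U_g$ on $\Fun(V,\C)$ via pullback, and then show by direct computation that conjugation by $U_g$ permutes the Weyl generators according to the natural action of $g$ on labels. Concretely, I would set $(U_g f)(x):=f(g^{-1}x)$, so that $U_g$ is invertible with $U_g^{-1}=U_{g^{-1}}$. The normaliser property will then follow from the two conjugation identities $U_g T_a U_g^{-1}=T_{ga}$ and $U_g M_b U_g^{-1}=M_{gb}$, since these express that the subgroup $\{T_a,M_b\}$ is mapped into itself under conjugation by $U_g$, and hence the algebra $\mathcal P$ is preserved.

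The shift identity uses only additivity of $g$: a one-line substitution gives $(U_g T_a U_g^{-1}f)(x)=f(x-ga)$. The phase identity is where the isometry hypothesis is essential. Expanding definitions yields
\[
(U_g M_b U_g^{-1}f)(x)
=\varepsilon\!\big(\beta(b,g^{-1}x)\big)\,f(x),
\]
and the isometry property $\beta(b,g^{-1}x)=\beta(gb,x)$ rewrites this as $(M_{gb}f)(x)$. Thus conjugation by $U_g$ acts on Weyl labels by $(a,b)\mapsto(ga,gb)$, showing that the Weyl commutator bicharacter $\omega$ is preserved, since $\omega((ga,gb),(ga',gb'))=\varepsilon(\beta(gb,ga')-\beta(gb',ga))=\omega((a,b),(a',b'))$ by the same isometry. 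This gives the normaliser statement at the single-site level, and the monoidal construction of Section~3 together with the diagonal action $g^{\otimes n}$ on $H_n$ extends everything verbatim to the $n$-site setting, since the induced form on $H_n$ is preserved by $g^{\otimes n}$.

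For the action on algebraic quantum codes, I would argue both directly and via the stabiliser equivalence theorem. Directly: if $C\subseteq H_n$ is self-orthogonal, i.e.\ $C\subseteq C^\perp$, then for $c,c'\in C$ one has $\langle g^{\otimes n}c,\,g^{\otimes n}c'\rangle_{H_n}=\varepsilon(\beta_{H_n}(c,c'))=1$ by isometry, so $g^{\otimes n}C$ is again self-orthogonal. Hence $g$ induces a map on the set of algebraic quantum codes, and this map is functorial in $g$, yielding a group action of the isometry group. Via the stabiliser picture of Theorem~\ref{thm:stabiliser-equivalence}, this action matches conjugation on stabiliser subgroups: $\mathrm{Code}(U_g S U_g^{-1})=U_g\cdot\mathrm{Code}(S)$, which I would verify by observing that $s f=f$ iff $(U_g s U_g^{-1})(U_g f)=U_g f$.

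I expect the main obstacle to be only bookkeeping rather than a genuine difficulty: one must be careful that $U_g$ is well-defined as a $\C$-linear operator for every isometry (which is immediate since $g$ is a bijection of $V$), and that the stated action on codes descends cleanly modulo the central scalar ambiguity of the Weyl group, exactly as in the final step of the proof of Theorem~\ref{thm:stabiliser-equivalence}. No deeper input is needed, since isometry of $\beta$ is precisely the algebraic condition that forces preservation of $\omega$, which in turn is the only structure governing commutation and hence stabiliser theory.
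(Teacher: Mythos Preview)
Your proposal is correct and follows essentially the same approach as the paper: define $U_g$ by pullback $(U_gf)(x)=f(g^{-1}x)$, verify the conjugation identities $U_gT_aU_g^{-1}=T_{ga}$ and $U_gM_bU_g^{-1}=M_{gb}$ (the latter using the isometry hypothesis), and conclude that stabiliser subgroups are sent to stabiliser subgroups. Your write-up is in fact slightly more complete than the paper's, since you make the extension to $H_n$ via $g^{\otimes n}$ explicit and give the direct self-orthogonality argument in addition to the stabiliser one; the paper works implicitly at the single-site level and appeals only to the stabiliser description.
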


\begin{proof}
Let $g\in\mathrm{Isom}_R(V,\beta)$, so that
\[
\beta(gv,gw)=\beta(v,w)
\qquad\text{for all }v,w\in V.
\]
Define an operator $U_g$ on $\Fun(V,\C)$ by
\[
(U_g f)(x):=f(g^{-1}x).
\]

We first compute the conjugation action on shifts.
For any $f\in\Fun(V,\C)$ and $x\in V$,
\[
(U_g T_a U_g^{-1}f)(x)
    =f(g^{-1}x-g^{-1}a)
    =(T_{ga}f)(x),
\]
so
\[
U_g T_a U_g^{-1}=T_{ga}.
\]

We next compute the action on phase operators.
For any $f\in\Fun(V,\C)$ and $x\in V$,
\[
(U_g M_b U_g^{-1}f)(x)
    =\varepsilon(\beta(b,g^{-1}x))\,f(g^{-1}x).
\]
Since $g$ preserves $\beta$, we have
\[
\beta(b,g^{-1}x)=\beta(gb,x),
\]
and therefore
\[
U_g M_b U_g^{-1}=M_{gb}.
\]

Thus $U_g$ sends Weyl generators to Weyl generators with transformed labels.
Since the Weyl algebra is generated by $\{T_a,M_b:a,b\in V\}$, it follows that
$U_g$ normalises the entire Weyl algebra.

Finally, a stabiliser code is defined as the simultaneous fixed space of a
commuting family of Weyl operators.  Conjugation by $U_g$ preserves
commutativity relations and sends stabiliser subgroups to stabiliser
subgroups.  Hence $U_g$ induces a natural action on the space of algebraic
quantum codes.
\end{proof}

\begin{remark}
Elements of $\mathrm{Isom}_R(V,\beta)$ act by conjugation on Weyl generators,
sending $(T_a,M_b)$ to $(T_{ga},M_{gb})$.  In this sense, the isometry group
functions as a Clifford-type normaliser for the algebraic Weyl system and acts
compatibly on the associated stabiliser codes.
\end{remark}

\section{Examples}

We illustrate the Frobenius-to-code construction in two basic Frobenius
rings.  In both examples we fix
\[
k=n=2,\qquad V=R^2,\qquad H_2:=V^{\otimes_R 2},
\]
and use the standard $R$-bilinear form
\[
\beta\big((x_1,x_2),(y_1,y_2)\big)=x_1y_1+x_2y_2.
\]
The induced phase pairing on $H_2$ is the monoidal extension introduced
earlier.

\subsection{$R=\F_2+u\F_2$ with $u^2=0$}

Write $R=\{a+ub:a,b\in\F_2\}$ and let $N=(u)$, so that $N^2=0$.
A generating character is
\[
\varepsilon(a+ub)=(-1)^b,
\]
giving the Frobenius phase pairing $\langle x,y\rangle_R=\varepsilon(xy)$.

\paragraph{Weyl operators and commutation.}
For $a,b\in V$ set
\[
X(a)=T_a,\qquad Z(b)=M_b,\qquad W(a,b)=X(a)Z(b).
\]
Then
\[
W(a,b)\,W(a',b')
=
\varepsilon\!\big(\beta(b,a')-\beta(b',a)\big)\,
W(a',b')\,W(a,b).
\]
Over $R=\F_2+u\F_2$, the commutation phase depends only on the $u$-component of
$\beta(b,a')-\beta(b',a)$.

\paragraph{A canonical self-orthogonal code.}
Consider the $R$-submodule
\[
C_N := N H_2 = u H_2 \subseteq H_2.
\]
For $x,y\in H_2$,
\[
\langle ux,uy\rangle_{H_2}
=
\varepsilon\!\big(\beta(ux,uy)\big)
=
\varepsilon(u^2\,\beta(x,y))
=
\varepsilon(0)
=
1,
\]
so $C_N\subseteq C_N^\perp$.
Thus $uH_2$ is a canonical algebraic quantum code arising directly from the
nilpotent ideal.

\paragraph{Commuting mixed stabilisers.}
Let $e_1,e_2$ denote the standard basis of $V=R^2$ and define
\[
s_1=(e_1,\,u e_1),\qquad s_2=(e_2,\,u e_2)
\quad\text{in }V\oplus V.
\]
Since
\[
\beta(ue_1,e_2)-\beta(ue_2,e_1)=0,
\]
the stabiliser generators $W(e_1,ue_1)$ and $W(e_2,ue_2)$ commute.
These stabilisers involve both shift and phase components and do \emph{not}
admit a pure $X$/$Z$ splitting.
Such intrinsically mixed but commuting generators do not occur over fields.

\paragraph{Nilpotent protection.}
The code $C_N=uH_2$ consists entirely of nilpotent directions.
Admissible Weyl-type errors factoring through the reduction
$R\to R/N\cong \F_2$ act trivially on this layer, illustrating the intrinsic
nilpotent protection phenomenon.

\subsection{$R=\Z_4$}

Let $R=\Z_4$ with generating character
\[
\varepsilon(x)=i^x.
\]
Then $\langle x,y\rangle_R=i^{xy}$, so Weyl commutation phases take values in
$\{\pm1,\pm i\}$.

\paragraph{A torsion-nilpotent code layer.}
Let $N=(2)\subset \Z_4$, so $N^2=(4)=0$.
As before this yields a canonical self-orthogonal submodule
\[
C_2 := 2H_2 \subseteq H_2,
\qquad
\langle 2x,2y\rangle_{H_2}
=
\varepsilon(4\,\beta(x,y))
=
\varepsilon(0)
=
1.
\]
Thus $2H_2\subseteq (2H_2)^\perp$, giving an algebraic quantum code whose
existence is forced by torsion and nilpotent structure rather than field
linearity or semisimplicity.

\paragraph{Explicit Weyl phases.}
For $a,b\in V$,
\[
X(a)Z(b)=i^{\beta(b,a)}\,Z(b)X(a),
\]
so even when $(k,n)=(2,2)$ the Weyl commutation phases reflect genuinely
non-field behaviour.

\medskip
These examples show that self-orthogonal quantum codes arise canonically from
nilpotent ideals, that commuting stabilisers may be intrinsically mixed rather
than CSS, and that Weyl commutation phases detect intrinsic ring structure
independent of analytic considerations.

\section{Conclusion}

In this paper we showed that quantum phase, uncertainty, stabiliser structure,
and error protection arise as unavoidable algebraic consequences of Frobenius
duality.  No analytic inner product, Hilbert space formalism, or externally
imposed symplectic structure is required.  Instead, noncommutativity, Weyl
relations, and quantum compatibility are forced intrinsically by algebraic
phase data.

Within this algebraic framework, quantum codes are identified canonically with
self-orthogonal submodules under the Frobenius phase pairing.  This yields a
direct stabiliser construction over Frobenius rings, of which CSS codes appear
only as a special splitting case.  Nilpotent and torsion structure give rise to
intrinsically protected quantum layers and genuinely non-CSS stabilisers, even
for small parameters.

Conceptually, the results position quantum coding theory as a concrete instance
of Algebraic Phase Theory: quantisation appears as phase induction rather than
analytic completion, and quantum structure is information complete at the level
of algebraic phase relations.

The emphasis here has been structural rather than classificatory.  Subsequent
work develops the broader categorical setting of Algebraic Phase Theory,
including morphisms and equivalences, boundary calculus, deformation theory,
and reconstruction theorems.  Together, these developments place quantum
stabiliser theory within a wider algebraic landscape governed by intrinsic
phase structure.

\bibliographystyle{amsplain}
\bibliography{references}

@book{AndersonFuller1992Rings,
 author = {Anderson, F. W. and Fuller, K. R.},
 title = {Rings and categories of modules.},
 edition = {2. ed.},
 fseries = {Graduate Texts in Mathematics},
 series = {Grad. Texts Math.},
 issn = {0072-5285},
 volume = {13},
 isbn = {0-387-97845-3},
 year = {1992},
 publisher = {New York: Springer-Verlag},
 language = {English},
 keywords = {16-01,16D90,16P20},
 zbMATH = {107550},
 Zbl = {0765.16001}
}

@article{CalderbankShor,
 author = {Calderbank, A. R. and Shor, Peter W.},
 title = {Good quantum error-correcting codes exist},
 fjournal = {Physical Review A, Third Series},
 journal = {Phys. Rev. A (3)},
 issn = {1050-2947},
 volume = {54},
 number = {2},
 pages = {1098--1105},
 year = {1996},
 language = {English},
 doi = {10.1103/PhysRevA.54.1098},
 keywords = {81P70,94B60,81P16},
 zbMATH = {7918813},
 Zbl = {1546.81049}
}

@misc{CurtisReiner,
 author = {Curtis, Charles W. and Reiner, Irving},
 title = {Methods of representation theory, with applications to finite groups and orders. {Vol}. {I}},
 year = {1981},
 language = {English},
 howpublished = {Pure and {Applied} {Mathematics}. {A} {Wiley}-{Interscience} {Publication}. {New} {York} etc.: {John} {Wiley} \& {Sons}. {XXI}, 819 p. {{\textsterling}} 40.70 (1981).},
 keywords = {20-02,16-02,20Cxx,16H05,16Gxx,16U30,20C05,20C10,20C15,20C20,20C25,16P10},
 zbMATH = {3736029},
 Zbl = {0469.20001}
}

@article{DinhLopezPermouth,
 author = {Dinh, Hai Quang and L{\'o}pez-Permouth, Sergio R.},
 title = {Cyclic and negacyclic codes over finite chain rings.},
 fjournal = {IEEE Transactions on Information Theory},
 journal = {IEEE Trans. Inf. Theory},
 issn = {0018-9448},
 volume = {50},
 number = {8},
 pages = {1728--1744},
 year = {2004},
 language = {English},
 doi = {10.1109/TIT.2004.831789},
 keywords = {94B15},
 zbMATH = {5454294},
 Zbl = {1243.94043}
}

@book{Folland,
 author = {Folland, G. B.},
 title = {Harmonic analysis in phase space},
 fseries = {Annals of Mathematics Studies},
 series = {Ann. Math. Stud.},
 volume = {122},
 isbn = {0-691-08528-5; 0-691-08527-7},
 year = {1989},
 publisher = {Princeton, NJ: Princeton University Press},
 language = {English},
 doi = {10.1515/9781400882427},
 keywords = {43-02,43A80,22E70,35S05,81-02,43A85},
 zbMATH = {42454},
 Zbl = {0682.43001}
}

@misc{GildeaAPT1,
  author = {J. Gildea},
  title  = {Algebraic Phase Theory I: Radical Phase Geometry and Structural Boundaries},
  year   = {2026},
  note   = {Available at arXiv:2601.16334},
  eprint = {2601.16334},
  archivePrefix = {arXiv},
  primaryClass  = {math.QA},
  url    = {https://arxiv.org/abs/2601.16334}
}

@misc{GildeaAPT2,
  author = {{J.} Gildea},
  title  = {Algebraic Phase Theory II: The Frobenius--Heisenberg Phase and Boundary Rigidity},
  year   = {2026},
  note   = {Available at arXiv:2601.16341},
  eprint = {},
  archivePrefix = {arXiv},
  primaryClass  = {math.QA},
  url    = {https://arxiv.org/abs/2601.16341}
}

@misc{GildeaAPT4,
  author = {{J.} Gildea},
  title  = {Algebraic Phase Theory IV: Morphisms, Equivalences, and Categorical Rigidity},
  year   = {2026},
  note   = {Available at arXiv:2601.18254},
  eprint = {2601.18254},
  archivePrefix = {arXiv},
  primaryClass  = {math.QA},
  url    = {https://arxiv.org/abs/2601.18254}
}

@misc{GildeaAPT5,
  author = {J.{} Gildea},
  title  = {Algebraic Phase Theory V: Boundary Calculus, Rigidity Islands, and Deformation Theory},
  year   = {2026},
  note   = {Available at arXiv:2601.18258},
  eprint = {2601.18258},
  archivePrefix = {arXiv},
  primaryClass  = {math.QA},
  url    = {https://arxiv.org/abs/2601.18258}
}

@misc{GildeaAPT6,
  author = {{J.}{} Gildea},
  title  = {Algebraic Phase Theory VI: Duality, Reconstruction, and Structural Toolkit Theorems},
  year   = {2026},
  note   = {Available at arXiv:2601.18257},
  eprint = {2601.18257},
  archivePrefix = {arXiv},
  primaryClass  = {math.QA},
  url    = {https://arxiv.org/abs/2601.18257}
}

@phdthesis{Gottesman,
  author  = {Gottesman, David},
  title   = {Stabilizer Codes and Quantum Error Correction},
  school  = {California Institute of Technology},
  year    = {1997},
  note    = {Ph.D. thesis},
}

@article{GreferathSchmidt2000Characters,
 author = {Greferath, M. and Schmidt, S. E.},
 title = {Finite-ring combinatorics and {MacWilliams}' equivalence theorem},
 fjournal = {Journal of Combinatorial Theory. Series A},
 journal = {J. Comb. Theory, Ser. A},
 issn = {0097-3165},
 volume = {92},
 number = {1},
 pages = {17--28},
 year = {2000},
 language = {English},
 doi = {10.1006/jcta.1999.3033},
 keywords = {94B05,16L60},
 url = {semanticscholar.org/paper/6385e9cfc0ff17bdd42f473cec23606796e834ee},
 zbMATH = {1530283},
 Zbl = {1087.94022}
}

@article{GurevichHadani,
 author = {Gurevich, S. and Hadani, R.},
 title = {The {Weil} representation in characteristic two},
 fjournal = {Advances in Mathematics},
 journal = {Adv. Math.},
 issn = {0001-8708},
 volume = {230},
 number = {3},
 pages = {894--926},
 year = {2012},
 language = {English},
 doi = {10.1016/j.aim.2012.03.008},
 keywords = {11F27},
 zbMATH = {6059014},
 Zbl = {1277.11038}
}

@article{Honold2001QuasiFrobenius,
 author = {Honold, T.},
 title = {Characterization of finite {Frobenius} rings},
 fjournal = {Archiv der Mathematik},
 journal = {Arch. Math.},
 issn = {0003-889X},
 volume = {76},
 number = {6},
 pages = {406--415},
 year = {2001},
 language = {English},
 doi = {10.1007/PL00000451},
 keywords = {16L60,16P10},
 zbMATH = {1652470},
 Zbl = {0984.16017}
}

@article{Howe1979Heisenberg,
 author = {Howe, R.},
 title = {On the role of the {Heisenberg} group in harmonic analysis},
 fjournal = {Bulletin of the American Mathematical Society. New Series},
 journal = {Bull. Am. Math. Soc., New Ser.},
 issn = {0273-0979},
 volume = {3},
 pages = {821--843},
 year = {1980},
 language = {English},
 doi = {10.1090/S0273-0979-1980-14825-9},
 keywords = {43-02,43A45,58J40,35-02,22-02},
 zbMATH = {3690004},
 Zbl = {0442.43002}
}

@book{Isaacs,
 author = {Isaacs, I. Martin},
 title = {Character theory of finite groups.},
 edition = {Corrected reprint of the 1976 original},
 isbn = {0-8218-4229-3},
 year = {2006},
 publisher = {Providence, RI: AMS Chelsea Publishing},
 language = {English},
 keywords = {20Cxx,20-02},
 zbMATH = {5080028},
 Zbl = {1119.20005}
}

@book{Jacobson1956Structure,
  author    = {Jacobson, N.},
  title     = {Structure of Rings},
  series    = {American Mathematical Society Colloquium Publications},
  volume    = {37},
  publisher = {American Mathematical Society},
  address   = {Providence, RI},
  year      = {1956},
  isbn      = {978-0-8218-1037-8},
  mrnumber  = {0081264},
  zbmath    = {0073.02002}
}

@article{Ketkar,
 author = {Ketkar, Avanti and Klappenecker, Andreas and Kumar, Santosh and Sarvepalli, Pradeep Kiran},
 title = {Nonbinary stabilizer codes over finite fields.},
 fjournal = {IEEE Transactions on Information Theory},
 journal = {IEEE Trans. Inf. Theory},
 issn = {0018-9448},
 volume = {52},
 number = {11},
 pages = {4892--4914},
 year = {2006},
 language = {English},
 doi = {10.1109/TIT.2006.883612},
 keywords = {94B60,81P70},
 zbMATH = {5455281},
 Zbl = {1242.94045}
}

@book{Lam2001FirstCourse,
 author = {Lam, T. Y.},
 title = {A first course in noncommutative rings.},
 edition = {2nd ed.},
 fseries = {Graduate Texts in Mathematics},
 series = {Grad. Texts Math.},
 issn = {0072-5285},
 volume = {131},
 isbn = {0-387-95325-6; 0-387-95183-0},
 year = {2001},
 publisher = {New York, NY: Springer},
 language = {English},
 keywords = {16-01,16D70,16D60,16L30,16W60},
 zbMATH = {1633834},
 Zbl = {0980.16001}
}

@article{Mackey1958Induced,
 author = {Mackey, G. W.},
 title = {Unitary representations of group extensions. {I}},
 fjournal = {Acta Mathematica},
 journal = {Acta Math.},
 issn = {0001-5962},
 volume = {99},
 pages = {265--311},
 year = {1958},
 language = {English},
 doi = {10.1007/BF02392428},
 zbMATH = {3134576},
 Zbl = {0082.11301}
}

@article{Rains,
 author = {Rains, E. M.},
 title = {Nonbinary quantum codes},
 fjournal = {IEEE Transactions on Information Theory},
 journal = {IEEE Trans. Inf. Theory},
 issn = {0018-9448},
 volume = {45},
 number = {6},
 pages = {1827--1832},
 year = {1999},
 language = {English},
 doi = {10.1109/18.782103},
 keywords = {94B60,81P70,94B65},
 url = {resolver.caltech.edu/CaltechAUTHORS:20170926-145137075},
 zbMATH = {1503194},
 Zbl = {0958.94038}
}

@book{Serre,
 author = {Serre, Jean-Pierre},
 title = {Linear representations of finite groups. {Translated} from the {French} by {Leonard} {L}. {Scott}},
 fseries = {Graduate Texts in Mathematics},
 series = {Grad. Texts Math.},
 issn = {0072-5285},
 volume = {42},
 year = {1977},
 publisher = {Springer, Cham},
 language = {English},
 keywords = {20Cxx,20Dxx,11S20,16D40,16E20},
 zbMATH = {3552764},
 Zbl = {0355.20006}
}

@article{Steane,
 author = {Steane, A. M.},
 title = {Error correcting codes in quantum theory},
 fjournal = {Physical Review Letters},
 journal = {Phys. Rev. Lett.},
 issn = {0031-9007},
 volume = {77},
 number = {5},
 pages = {793--797},
 year = {1996},
 language = {English},
 doi = {10.1103/PhysRevLett.77.793},
 keywords = {81P70,94B99,94A05,81P45},
 zbMATH = {1477738},
 Zbl = {0944.81505}
}

@article{vonNeumann1931,
 author = {von Neumann, J.},
 title = {Die {Eindeutigkeit} der {Schr{\"o}dingerschen} {Operatoren}},
 fjournal = {Mathematische Annalen},
 journal = {Math. Ann.},
 issn = {0025-5831},
 volume = {104},
 pages = {570--578},
 year = {1931},
 language = {German},
 doi = {10.1007/BF01457956},
 url = {https://eudml.org/doc/159483},
 zbMATH = {3000708},
 Zbl = {0001.24703}
}

@article{Weil1964SurCertains,
 author = {Weil, A.},
 title = {Sur certains groupes d'op{\'e}rateurs unitaires},
 fjournal = {Acta Mathematica},
 journal = {Acta Math.},
 issn = {0001-5962},
 volume = {111},
 pages = {143--211},
 year = {1964},
 language = {French},
 doi = {10.1007/BF02391012},
 zbMATH = {3322144},
 Zbl = {0203.03305}
}

@article{Wood1999Duality,
 author = {Wood, J. A.},
 title = {Duality for modules over finite rings and applications to coding theory},
 fjournal = {American Journal of Mathematics},
 journal = {Am. J. Math.},
 issn = {0002-9327},
 volume = {121},
 number = {3},
 pages = {555--575},
 year = {1999},
 language = {English},
 doi = {10.1353/ajm.1999.0024},
 keywords = {94B05,11T71,13M99,16L60},
 url = {muse.jhu.edu/journals/american_journal_of_mathematics/toc/ajm121.3.html},
 zbMATH = {1309774},
 Zbl = {0968.94012}
}

\end{document}